\providecommand{\algorithmname}{Algorithm}
\numberwithin{equation}{section}
\theoremstyle{plain}
\newtheorem{thm}{\protect\theoremname}
\theoremstyle{plain}
\newtheorem{lem}[thm]{\protect\lemmaname}
\theoremstyle{plain}
\newcounter{assumptioncounter}
\newtheorem{assumption}{\protect\assumptionname}
\theoremstyle{remark}
\newtheorem{rem}[thm]{\protect\remarkname}
\theoremstyle{plain}
\newtheorem{cor}[thm]{\protect\corollaryname}
\theoremstyle{plain}
\newtheorem{prop}[thm]{\protect\propositionname}
\let\ref\cref
\newcommand{\restore@Environment}[1]{%
  \AtBeginDocument{%
    \csletcs{#1*}{#1}%
    \csletcs{end#1*}{end#1}%
  }%
}
\forcsvlist\restore@Environment{alignat,equation,gather,multline,flalign,align}
\crefname{equation}{}{}
\let\oldthm\thm
\renewcommand{\thm}{%
\crefalias{thm}{theorem}
\oldthm
}
\let\oldlem\lem
\renewcommand{\lem}{%
\crefalias{thm}{lemma}
\oldlem
}
\let\oldprop\prop
\renewcommand{\prop}{%
\crefalias{thm}{proposition}
\oldprop
}
\let\oldcor\cor
\renewcommand{\cor}{%
\crefalias{thm}{corollary}
\oldcor
}
\let\oldexample\example
\renewcommand{\example}{%
\crefalias{thm}{example}
\oldexample
}
\let\oldassumption\assumption
\renewcommand{\assumption}{%
\crefalias{thm}{assumption}
\oldassumption
}
\let\oldrem\rem
\renewcommand{\rem}{%
\crefalias{thm}{remark}
\oldrem
}
\let\ORGhypersetup\hypersetup
\protected\def\hypersetup{\ORGhypersetup}
  \def\hypersetup#1{}%
  \let\Cref\crtCref
  \let\cref\crtcref
  \let\ref\crtcref
\crefname{assumption}{Assumption}{Assumptions}
\providecommand{\assumptionname}{Assumption}
\providecommand{\corollaryname}{Corollary}
\providecommand{\lemmaname}{Lemma}
\providecommand{\propositionname}{Proposition}
\providecommand{\remarkname}{Remark}
\providecommand{\theoremname}{Theorem}
\global\long\def\N{\mathbb{N}}%
\global\long\def\R{\mathbb{R}}%
\global\long\def\P{\mathbb{P}}%
\global\long\def\E{\mathbb{E}}%
\global\long\def\1{\mathbbm1}%
\global\long\def\d{\mathrm{d}}%
\global\long\def\e{\mathrm{e}}%
\global\long\def\lebesgue{\lambda\mkern-13mu  \lambda}%
\global\long\def\argmin#1{\operatorname*{arg\,\min}_{#1}}%
\global\long\def\eps{\varepsilon}%
\global\long\def\theta{\vartheta}%
\global\long\def\le{\leqslant}%
\global\long\def\ge{\geqslant}%
\global\long\def\KL{\operatorname{KL}}%
\global\long\def\subset{\subseteq}%
\global\long\def\comp{\mathsf{c}}%
\global\long\def\tilde#1{\widetilde{#1}}%
\global\long\def\hat#1{\widehat{#1}}%
\begin{document}

\title{A PAC-Bayes oracle inequality for sparse neural networks}
\author{Maximilian F. Steffen and Mathias Trabs\thanks{Financial support of the DFG through project TR 1349/3-1 is gratefully
acknowledged.}}
\date{Karlsruhe Institute of Technology}
\maketitle
\begin{abstract}
\noindent We study the Gibbs posterior distribution for sparse deep neural nets in a nonparametric regression setting. The posterior can be accessed via Metropolis-adjusted Langevin algorithms. Using a mixture over uniform priors on sparse sets of network weights, we prove an oracle inequality which shows that the method adapts to
the unknown regularity and hierarchical structure of the regression function. The estimator achieves the minimax-optimal rate of convergence (up to a logarithmic factor). 
\end{abstract}
\textbf{Keywords:} Adaptive nonparametric estimation, stochastic neural
network, optimal contraction rate, 

oracle inequality

\smallskip{}

\noindent\textbf{MSC 2020:} 62G08, 62F15, 68T05

\section{Introduction\label{sec:intro}}
Driven by the enormous success of neural networks in a broad spectrum
of machine learning applications, see \citet{goodfellowEtAl2016}
and \citet{schmidhuber2015} for an introduction, the theoretical
understanding of network based methods is a dynamic and flourishing
research area at the intersection of \mbox{mathematical} statistics,
optimization and approximation theory. In addition to theoretical guarantees, uncertainty
quantification is an important and challenging
problem for neural networks and has motivated the introduction of
Bayesian neural networks, where for each network weight a distribution
is learned, see \citet{graves2011} and \citet{blundellEtAl2015}
and numerous subsequent articles. In this work we study the Gibbs posterior distribution for a stochastic neural network. In a nonparametric regression problem, we show that the corresponding estimator achieves a minimax-optimal prediction risk bound up to a logarithmic factor. Moreover, the method is adaptive with respect to the unknown regularity and structure of the regression function.

\medskip{}

While early theoretical foundations for neural nets are summarized
by \citet{AnthonyBartlett1999}, the excellent approximation properties
of deep neural nets, especially with the ReLU activation function,
have been discovered in the last years, see e.g.\ \citet{yarotsky2017},
the review paper \citet{devoreEtAl2021} and \citet{bolcskeiEtAl2019}
for sparse deep neural networks. In addition to these approximation
properties, an essential explanation of the empirical capabilities
of neural networks has recently been given by \citet{schmidthieber2020}
as well as \citet{BauerKohler2019}: While classical regression methods
suffer from the curse of dimensionality, sparse neural network estimators
can profit from a hierarchical structure of the regression function
and a possibly much smaller intrinsic dimension. In particular, \citet{schmidthieber2020}
has analyzed sparse deep neural networks with ReLU activation function
which is the network class considered subsequently. To achieve a sparse neural network, a standard approach is to use penalized empirical
risk minimization which has been analyzed for neural nets by \citet{taheriEtAl2021}.

Instead of penalized empirical risk minimization, we train a neural
network by learning the Gibbs posterior distribution. The resulting stochastic neural network satisfies an oracle inequality
which implies an adaptive version of the upper bound by \citet{schmidthieber2020},
i.e., the network estimator achieves this upper bound without prior
knowledge of the hierarchical structure and the regularity of the
regression function.\medskip{}

The PAC-Bayes approach goes back to \citet{ShaweTaylor1997} and \citet{McAllester1999a,McAllester1999}.
In a quite general setting it is possible to derive upper bounds for
the prediction error either in terms of the empirical risk or in terms
of an oracle inequality. We refer to the review papers by \citet{Guedj2019}
and \citet{Alquier2021} as well as \citet{alquier2013} and \citet{steffen2023pac} for applications in sparse nonparametric regression. Empirical PAC-Bayes bounds are intensively
studied for (deep) neural nets, see \citet{Dziugaite2017,perezEtAl2021}
and further references in \citet[Section 3.3]{Alquier2021}. PAC-Bayes
oracle inequalities are less well studied. Exceptions are  \citet{cherief2020},
who has proved a (non-adaptive) oracle inequality for the variational
approximation of the PAC-Bayes procedure based on the theory by \citet{catoni2007},
and \citet{tinsiDalalyan2021} who have used the PAC-Bayes bound for
an aggregation of (shallow) neural networks. Further, \citet{franssenSzabo2022} have studied an empirical Bayes-type approach in the last layer leading to Bayesian credible sets with frequentist coverage guarantees.

\medskip{}

In order to sample from the Gibbs posterior, a Metropolis-adjusted Langevin algorithm (MALA) can be applied, see \citet{Besag1994, roberts1996b}. This apporach can exploit the well-established and efficient gradient descent algorithms. More precisely, MALA relies
on a Metropolis-Hastings algorithm where the proposal density is centered
around a gradient descent step. This yields an efficient MCMC algorithm
whose computational costs are comparable to a gradient based method.
The MCMC approach is different from the majority of literature on Bayesian
neural networks which mainly focuses on variational Bayesian inference,
see \citet{BleiEtal2017} for a review on variational inference and
\citet{graves2011} and \citet{blundellEtAl2015} for early applications
to neural networks. A main reason is that with a growing sample size the calculation of the acceptance probability in the Metropolis-Hastings step becomes expensive. To adapt to sparsity, we can choose a prior that
prefers networks with sparse weights. 

It should be noted that due to the non-convex dependence on the network
weights, the computational analysis of such an MCMC algorithm is as
challenging as the convergence analysis of gradient based methods
towards a global risk minimizer. 

\medskip{}

The paper is organized as follows: In \ref{sec:estimation}, we introduce
our estimation method and discuss its implementation. In \ref{sec:Oracle-inequality},
we state and discuss our main results for the proposed method. 
All proofs have been postponed to \ref{sec:Proofs} and some further details
on algorithmic aspects are contained in \ref{sec:Implementation}.

\section{Estimation method\label{sec:estimation}}

We consider a training sample $\mathcal{D}_{n}\coloneqq(\mathbf{X}_{i},Y_{i})_{i=1,\dots,n}\subset\R^{p}\times\R$
given by $n\in\N$ i.i.d.\ copies of generic random variables $(\mathbf{X},Y)\in\R^{p}\times\R$
on some probability space $(\Omega,\mathcal{A},\P)$ and the aim to estimate the regression function
$f\colon\R^{p}\to\R$ given by $Y=f(\mathbf{X})+\varepsilon$ with
observation error $\varepsilon$ satisfying $\E[\varepsilon\mid\mathbf{X}]=0$
almost surely (a.s.). Equivalently, $f(\mathbf{X})=\E[Y\mid\mathbf{X}]$ a.s.\ For any
estimator $\hat{f}$, the prediction risk and its empirical counterpart
are given by 
\begin{equation}
R(\hat{f})\coloneqq\E_{(\mathbf{X},Y)}\big[(Y-\hat{f}(\mathbf{X}))^{2}\big]\qquad\text{and}\qquad R_{n}(\hat{f})=\frac{1}{n}\sum_{i=1}^{n}\big(Y_{i}-\hat{f}(\mathbf{X}_{i})\big)^{2},\label{eq:R}
\end{equation}
respectively, where $\E$ denotes the expectation under $\P$ and $\E_Z$ is the (conditional) expectation only with respect to a random variable $Z$. The accuracy of the estimation procedure will be quantified
in terms of the excess risk 
\begin{equation}
\mathcal{E}(\hat{f})\coloneqq R(\hat{f})-R(f)=\E_{\mathbf{X}}\big[(\hat{f}(\mathbf{X})-f(\mathbf{X}))^{2}\big]=\Vert\hat{f}-f\Vert_{L^{2}(\P^{\mathbf{X}})}^{2}.\label{eq:excess}
\end{equation}
In the following we first introduce the considered class of sparse
neural networks. Afterwards, we introduce the estimation method and discuss its implementation. Throughout, $\vert x\vert_{q}$ denotes the $\ell^{q}$-norm
of a vector $x\in\R^{p},\,q\in[1,\infty]$. In particular, $\vert\cdot\vert\coloneqq \vert\cdot\vert_{2}$
is the Euclidean norm. For $a,b\in\R$ we write $a\lor b\coloneqq\max\{a,b\}$ and $a\land b\coloneqq \min\{a,b\}$. The cardinality of a set $\mathcal{I}$ is
denoted by $\vert\mathcal{I}\vert$. 

\subsection{Sparse neural networks}

We consider a \emph{feedforward multilayer perceptron} with $L\in\N$
hidden layers of constant width $r\in\N$. The latter restriction is purely to simplify the notation. The \emph{rectified linear unit}
(ReLU) $\sigma(x)\coloneqq x\lor 0,\,x\in\R,$ is used as activation
function. We write $\sigma_{v}x\coloneqq\big(\sigma(x_{i}+v_{i})\big)_{i=1,\dots,d}$
for vectors $x,v\in\R^{d}$. With this notation we can represent such neural networks as 
\begin{equation}
g_{\theta}(\mathbf{x})\coloneqq W^{(L+1)}\sigma_{v^{(L)}}W^{(L)}\sigma_{v^{(L-1)}}\cdots W^{(2)}\sigma_{v^{(1)}}W^{(1)}\mathbf{x}+v^{(L+1)},\qquad\mathbf{x}\in\R^{p},\label{eq:network}
\end{equation}
where the entries of the weight matrices $W^{(1)}\in\R^{r\times p},W^{(2)},\dots,W^{(L)}\in\R^{r\times r},W^{(L+1)}\in \R^{1\times r}$
and shift vectors $v^{(1)},\dots,v^{(L)}\in\R^{r},v^{(L+1)}\in\R$ are all collected in one parameter vector $\theta$. The total
number of network parameters is
\[
P\coloneqq (p+1)r+(L-1)(r+1)r+r+1.
\]
A possibly
more intuitive layer-wise representation of $g_{\theta}$ is given
by
\begin{align}
\mathbf{x}^{(0)} & \coloneqq\mathbf{x}\in\R^{p}, \\
\mathbf{x}^{(l)} & \coloneqq\sigma(W^{(l)}\mathbf{x}^{(l-1)}+v^{(l)}),\,l=1,\dots,L,\label{eq:neurons}\\
g_{\theta}(\mathbf{x})\coloneqq\mathbf{x}^{(L+1)} & \coloneqq W^{(L+1)}\mathbf{x}^{(L)}+v^{(L+1)},
\end{align}
where the activation function is applied coordinate-wise. We denote
the class of all such functions $g_{\theta}$ by $\mathcal{G}(p,L,r)$.
A network is sparse, or more precisely \emph{connection sparse}, if
many weights in the network are zero and thus some links between nodes
are inactive. For some active set $\mathcal{I}\subset\{1,\dots,P\}$,
the corresponding class of sparse networks is defined by 
\[
\mathcal{G}(p,L,r,\mathcal{I})\coloneqq\{g_{\theta}\in\mathcal{G}(p,L,r):\theta_{i}=0\text{ if }i\notin\mathcal{I}\}.
\]
For some $C\ge1$, we also introduce the class of clipped networks
\[
\mathcal{F}(p,L,r,C)\coloneqq\big\{ f_{\theta}\coloneqq(-C)\lor(g_{\theta}\land C)\,\big\vert\, g_{\theta}\in\mathcal{G}(p,L,r)\big\}
\]
and similarly we denote clipped networks with active set $\mathcal{I}$
by $\mathcal{F}(p,L,r,\mathcal{I},C)$. We abbreviate $R(\theta)\coloneqq R(f_{\theta})$
and $R_{n}(\theta)\coloneqq R_{n}(f_{\theta})$.

\subsection{Prior and posterior distribution}

In order to adopt the PAC-Bayes approach to neural networks, we first
choose a prior on the weights in the network class $\mathcal{G}(p,L,r)$.
For a given active set $\mathcal{I}$ the prior $\Pi_{\mathcal{I}}$
on the parameter set of the class $\mathcal{G}(p,L,r,\mathcal{I})$
is defined as the uniform distribution on 
\begin{equation}
\mathcal{S}_{\mathcal{I}}\coloneqq\{\theta\in[-B,B]^{P}\mid\theta_{i}=0\text{ if }i\notin\mathcal{I}\}\label{eq:SI}
\end{equation}
for some $B\ge1$. In the special case $\mathcal{I}^{\bullet}\coloneqq\{1,\dots,P\}$
we obtain a prior $\Pi^{\bullet}\coloneqq\Pi_{\mathcal{I}^{\bullet}}$
on the non-sparse network class $\mathcal{G}(p,L,r)$.  To allow
for a data-driven choice of the active set, we define the prior $\Pi$
as a mixture of the uniform priors $\Pi_{\mathcal{I}}$:
\begin{equation}
\Pi\coloneqq\sum_{i=1}^{P}2^{-i}\sum_{\substack{\mathcal{I}\subseteq\{1,\dots,P\},\\
\vert\mathcal{I}\vert=i
}
}\binom{P}{i}^{-1}\Pi_{\mathcal{I}}\Big/C_{P}\qquad\text{with}\qquad C_{P}\coloneqq(1-2^{-P}).\label{eq:prior}
\end{equation}
The basis $2$ of the geometric weights is arbitrary and can be replaced by a larger constant leading to a stronger preference of sparse networks. The theoretical results remain unchanged up to constants.
The prior $\Pi$ can be understood as a hierarchical prior, where
we first draw a geometrically distributed sparsity $i$, given $i$
we uniformly choose an active set $\mathcal{I}\subset\{1,\dots,P\}$
with $\vert \mathcal{I}\vert =i$ and on $\mathcal{I}$ the uniform prior $\Pi_{\mathcal{I}}$
is applied.

Based on \emph{$\Pi$,} the \emph{Gibbs posterior} probability distribution
$\Pi_{\lambda}(\cdot\mid\mathcal{D}_{n})$ is defined via 
\begin{equation}
\Pi_{\lambda}(\mathrm{d}\theta\mid\mathcal{D}_{n})\propto\exp(-\lambda R_{n}(\theta))\Pi(\mathrm{d}\theta)\label{eq:posterior}
\end{equation}
with the so-called \emph{inverse temperature parameter} $\lambda>0$ and empirical
prediction risk from \ref{eq:R}. While \ref{eq:posterior} coincides
with the classical Bayesian posterior distribution if $Y_{i}=f_{\theta}(\mathbf{X}_{i})+\eps_{i}$
with i.i.d.\ $\eps_{i}\sim\mathcal{N}(0,n/(2\lambda))$, the so-called
tempered likelihood $\exp(-\lambda R_{n}(\theta))$ serves as a proxy
for the unknown distribution 
\emergencystretch 3em 
of the observations given $\theta$. As we will see, the method is indeed applicable under quite general
assumptions on the regression model. The Gibbs posterior distribution
weights each $\theta$ based on its empirical performance on the data,
where the inverse temperature parameter $\lambda$ determines the impact of $R_{n}(\theta)$
in comparison to the prior beliefs.

An estimator for $f$ can be obtained by drawing from the posterior
distribution, i.e.,
\begin{equation}
\widehat{f}_{\lambda}\coloneqq f_{\widehat{\theta}_{\lambda}}\qquad\text{for}\qquad\widehat{\theta}_{\lambda}\mid\mathcal{D}_n\sim\Pi_{\lambda}(\cdot\mid\mathcal{D}_{n})\label{eq:Estimator}
\end{equation}
or in the form of the posterior mean
\begin{equation}
\bar{f}_{\lambda}\coloneqq\E\big[f_{\hat{\theta}_{\lambda}}\,\big\vert\,\mathcal{D}_{n}\big]=\int f_{\theta}\,\Pi_{\lambda}(\d\theta\mid\mathcal{D}_{n}).\label{eq:postMean}
\end{equation}
The maximum a posteriori (MAP)
estimator could be used as well, but we will focus on the previous two estimators. 

\subsection{MCMC algorithm}\label{sec:MCMC}

Since the normalizing constant of the posterior distribution is unknown,
the posterior distribution itself is not accessible in practice. Popular
Bayesian networks rely on a variational Bayes approach, cf. \citet{BleiEtal2017},
and approximate the posterior by some easier distribution. For instance,
the \emph{Bayes by backprop} method by \citet{blundellEtAl2015} uses
independent normal distributions for the weights such that the training
of the Bayesian network reduces to the calibration of the means and
variances for all weights. In contrast, the classical approach in
Bayesian statistics is to sample $\widehat{\theta}_{\lambda}$ by
constructing a Markov-chain $(\theta^{(k)})_{k\in\N_{0}}$ with stationary
distribution $\Pi_{\lambda}(\cdot\mid\mathcal{D}_{n})$ with a Markov
chain Monte Carlo (MCMC) method, see \citet{RobertCasella2004}. 
MCMC methods aim for the exact posterior distribution and have been successfully
applied to neural networks, see e.g.\ \citet{Zhang2020cyclicSGMCMC}, \citet{ssgMCMC2022}.

For ease of presentation, we will discuss MALA for the prior $\Pi^{\bullet}$ on the non-sparse network
class $\mathcal{G}(p,L,r)$, i.e.\ $\Pi^{\bullet}$ is the uniform
distribution on $[-B,B]^{P}$. The corresponding posterior distribution
is denoted by $\Pi_{\lambda}^{\bullet}(\mathrm{d}\theta\mid\mathcal{D}_{n})\propto\exp(-\lambda R_{n}(\theta))\Pi^{\bullet}(\mathrm{d}\theta)$. 

Applying the generic Metropolis-Hastings algorithm to $\Pi_{\lambda}^{\bullet}(\mathrm{d}\theta\mid \mathcal{D}_{n})$
and taking into account that the prior $\Pi^{\bullet}$ is uniform,
we obtain the following iterative method: Starting with some initial
choice $\theta^{(0)}\in\R^{P}$, we successively generate
$\theta^{(k+1)}$ given $\theta^{(k)}$, $k\in\N_{0}$, by 

\[
\theta^{(k+1)}=\begin{cases}
\tau^{(k)} & \text{with probability }\alpha(\tau^{(k)}\mid\theta^{(k)})\\
\theta^{(k)} & \text{with probability }1-\alpha(\tau^{(k)}\mid\theta^{(k)})
\end{cases},
\]
where $\tau^{(k)}$ is a random variable drawn from some conditional
proposal density $q(\cdot\mid\theta^{(k)})$ and the \emph{acceptance
probability} is given by

\[
\alpha(\tau^{(k)}\mid\theta^{(k)})=\min\Big\{1,\exp\big(-\lambda R_{n}(\tau^{(k)})+\lambda R_{n}(\theta^{(k)})\big)\1_{[-B,B]^{P}}(\tau^{(k)})\frac{q(\theta^{(k)}\mid\tau^{(k)})}{q(\tau^{(k)}\mid\theta^{(k)})}\Big\}.
\]
The Metropolis-Hastings acceptance step ensures that $(\theta^{(k)})_{k\in\N_{0}}$ is a Markov
chain with invariant distribution $\Pi^{\bullet}(\cdot\mid\mathcal{D}_{n})$.
The convergence to the invariant distribution follows from \citet[Theorem 2.2]{Roberts1996}.

The practical success of the Metropolis-Hastings algorithm fundamentally depends on the
choice of the proposal distribution. In the construction of $q(\tau\mid \theta)$,
MALA exploits the well-known gradient approach for an empirical risk
minimizer 
\[
\theta^{\ast}\in\argmin{\theta}R_{n}(\theta)
\]
with the empirical risk $R_{n}(\theta)=R_{n}(f_{\theta})$ from \ref{eq:R}.
The gradient $\nabla_{\theta}R_{n}(\theta)$ of $R_{n}(\theta)$ with
respect to $\theta$ can be computed efficiently using backpropagation.
The gradient descent method commonly used to train neural networks
would suggest to tweak a given $\theta^{(k)}$ in the direction of
the gradient, that is
\[
\tilde{\theta}^{(k+1)}=\theta^{(k)}-\gamma\nabla_{\theta}R_{n}(\theta^{(k)})
\]
with a \emph{learning rate} $\gamma>0$. Therefore, MALA proposes a normal
distribution around $\tilde{\theta}^{(k+1)}$ with some standard deviation
$s>0$. This standard deviation should not be too large as otherwise
the acceptance probability might be too small. As a result the proposal
would rarely be accepted, the chain might not be sufficiently randomized
and the convergence to the invariant target distribution would be
too slow in practice. On the other hand, $s$ should not be smaller
than the shift $\gamma\nabla_{\theta}R_{n}(\theta^{(k)})$ in the
mean, since otherwise $q(\theta^{(k)}\mid\tau^{(k)})$ might be too
small. Therefore, the probability density $q$ of the proposal distribution
is given by the density
\begin{equation}
q(\tau\mid\theta)=\frac{1}{(2\pi s^{2})^{P/2}}\exp\Big(-\frac{1}{2s^{2}}\big\vert\tau-\theta+\gamma\nabla_{\theta}R_{n}(\theta)\big\vert^{2}\Big).\label{eq:proposal}
\end{equation}

To calculate the estimators $\hat{f}_{\lambda}$ and $\bar{f}_{\lambda}$
from \ref{eq:Estimator} and \ref{eq:postMean}, respectively, one
chooses a \emph{burn-in} time $b\in\N$ to let the distribution of
the Markov chain stabilize at its invariant distribution and then
sets
\[
\hat{f}_{\lambda}\coloneqq f_{\theta^{(b)}}\qquad\text{and}\qquad\bar{f}_{\lambda}\coloneqq\frac{1}{N}\sum_{k=1}^{N}f_{\theta^{(b+ck)}}.
\]
A sufficiently large \emph{gap length} $c\in\N$ ensures the necessary
variability and an approximate independence between $\theta^{(b+ck)}$
and $\theta^{(b+c(k+1))}$, whereas $N\in\N$ has to be large enough
for a good approximation of the expectation by the empirical mean.

Note that the gradient has to be calculated only once in each MCMC
iteration. While we use the standard gradient, the calculation of
$\nabla_{\theta}R_{n}$ could be realized and combined with state-of-art
gradient based methods such as mini-batch stochastic gradient descent
or Adam \citep{kingmaBa2014}. Compared to the training of a non-stochastic network, the additional computational price of MALA is determined by the computation of the acceptance probability $\alpha(\tau^{(k)}\mid\theta^{(k)})$ and a larger number of necessary iterations due to
the potential rejection of proposals.

In order to extend the Metropolis-Hastings algorithm from the full
prior $\Pi^{\bullet}$ to the mixing prior $\Pi$ from \ref{eq:prior},
we have to take into account the hierarchical structure of $\Pi$.
Hence, we use the reversible-jump Markov chain Monte Carlo (RJMCMC)
algorithm first proposed by \citet{green1995}. In the context of
a PAC-Bayes method it has been discussed in \citet{Guedj2019}. In
particular, the RJMCMC algorithm has been successfully applied by
\citet{alquier2013} in a high-dimensional regression setting. Some
further details on the  of neural networks with
the RJMCMC algorithm are given in \ref{sec:Implementation}.

\section{Oracle inequality\label{sec:Oracle-inequality}}

In this section we state the theoretical guarantees for the Gibbs posterior distribution. 
As a benchmark for the performance of the method, we define the \emph{oracle choice}
on $\mathcal{S}_{\mathcal{I}}$ from \ref{eq:SI} for some active set $\mathcal{I}$ as 
\begin{equation}
\theta_{\mathcal{I}}^{\ast}\in\argmin{\theta\in\mathcal{S}_{\mathcal{I}}}R(\theta).\label{eq:oracle}
\end{equation}
The oracle is not accessible to the practitioner because $R(\theta)$ depends
on the unknown distribution of $(\mathbf{X},Y)$. A solution to the
minimization problem in \ref{eq:oracle} always exists since $\mathcal{S}_{\mathcal{I}}$
is compact and $\theta\mapsto R(f_{\theta})$ is continuous. If there
is more than one solution, we choose one of them. Our main result
gives a theoretical guarantee that the PAC-Bayes estimator $\widehat{f}_{\lambda}$
from \ref{eq:Estimator} is at least as good as the oracle $\theta_{\mathcal{I}}^{\ast}$
in terms of the excess risk. To this end, we need some mild assumptions
on the regression model.
\begin{assumption}
\label{assu:bounded}~
\begin{enumerate}
\item For $K,C\ge1$ we have $\E[\vert\mathbf{X}\vert^{2}]\le pK$
and $\Vert f\Vert_{\infty}\le C$.
\item $\eps$ is conditionally on $\mathbf{X}$ sub-Gaussian. More precisely,
there are constants $\sigma,\Gamma>0$ such that 
\[
\E[\vert\varepsilon\vert^{k}\mid\mathbf{X}]\le\frac{k!}{2}\sigma^{2}\Gamma^{k-2}\text{a.s.},\qquad\text{for all }k\ge2.
\]
\end{enumerate}
\end{assumption}

Note that neither the loss function nor the data are assumed to be
bounded. We obtain the following non-asymptotic oracle inequality:
\begin{thm}[PAC-Bayes oracle inequality]\label{thm:oracleinequality} 
Under \ref{assu:bounded} there are constants $\Xi_0,\Xi_1>0$ only depending on $C,\Gamma,\sigma$ such that for $\lambda=n/\Xi_0$ and sufficiently large $n$, we have with probability of at least $1-\delta$, that
\begin{equation}
\mathcal{E}(\hat{f}_{\lambda})\le\min_{\mathcal{I}}\Big(4\mathcal{E}(f_{\theta_{\mathcal{I}}^{\ast}})+\frac{\Xi_{1}}{n}\big(\vert\mathcal{I}\vert L\log(p\lor n)+\log(2/\delta)\big)\Big).\label{eq:oracleIneq}
\end{equation}
\end{thm}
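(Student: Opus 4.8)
The plan is to follow the localized PAC-Bayes strategy, adapted to the unbounded, sub-Gaussian errors allowed by \ref{assu:bounded}. Writing $g(\theta,\mathbf{x})\coloneqq f(\mathbf{x})-f_{\theta}(\mathbf{x})$ and using $Y=f(\mathbf{X})+\eps$ together with $\E[\eps\mid\mathbf{X}]=0$, the pointwise excess loss decomposes as $(Y-f_{\theta}(\mathbf{X}))^{2}-(Y-f(\mathbf{X}))^{2}=g(\theta,\mathbf{X})^{2}+2\eps\,g(\theta,\mathbf{X})$, whose expectation is $\mathcal{E}(f_{\theta})$ and whose empirical mean is $\mathcal{E}_{n}(\theta)\coloneqq R_{n}(\theta)-R_{n}(f)$. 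Since the clipping forces $\Vert f_{\theta}\Vert_{\infty},\Vert f\Vert_{\infty}\le C$, the quadratic part is bounded by $4C^{2}$, while the cross term is governed by the sub-Gaussian assumption. First I would establish a Bernstein-type cumulant bound for the centered excess loss in which the variance proxy is itself of order $\mathcal{E}(f_{\theta})$; this self-bounding feature is precisely what later yields a multiplicative constant in front of the oracle term rather than an uncontrolled additive variance. The admissible range $|t|<1/c$ in this Bernstein estimate is what pins down the choice $\lambda=n/\Xi_{0}$ and forces $n$ to be large.

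Combining the cumulant bound with the Donsker--Varadhan variational formula then gives, with probability at least $1-\delta$ and simultaneously over all probability measures $\rho\ll\Pi$, a two-sided comparison of $\E_{\rho}[\mathcal{E}(\theta)]$ and $\E_{\rho}[\mathcal{E}_{n}(\theta)]$ up to an additive $(\KL(\rho\Vert\Pi)+\log(1/\delta))/(c\lambda)$. I would apply the upper direction to the Gibbs posterior $\rho=\Pi_{\lambda}$, then invoke the Gibbs variational principle, namely that $\Pi_{\lambda}$ minimizes $\rho\mapsto\lambda\E_{\rho}[R_{n}]+\KL(\rho\Vert\Pi)$, to replace $\Pi_{\lambda}$ by an arbitrary competitor $\rho_{\mathcal{I}}$, to which the reverse direction is applied. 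Chaining these inequalities collapses the two localization constants into the explicit prefactor $4$, and taking the minimum over $\mathcal{I}$ at the very end is free, because the Gibbs posterior already optimizes over all competitors.

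It remains to design, for each active set $\mathcal{I}$, a competitor that is close to the oracle $\theta_{\mathcal{I}}^{\ast}$ yet cheap under the mixture prior. I would take $\rho_{\mathcal{I}}$ to be the uniform law on a small cube of half-width $h$ around $\theta_{\mathcal{I}}^{\ast}$ inside $\mathcal{S}_{\mathcal{I}}$. The divergence then splits into a combinatorial part $-\log\pi_{\mathcal{I}}\lesssim|\mathcal{I}|\log P$, where $\pi_{\mathcal{I}}=2^{-|\mathcal{I}|}\binom{P}{|\mathcal{I}|}^{-1}/C_{P}$ is the weight that \ref{eq:prior} assigns to $\mathcal{I}$, and a local part $|\mathcal{I}|\log(B/h)$ from the ratio of the two uniform densities. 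To bound $\E_{\rho_{\mathcal{I}}}[\mathcal{E}_{n}(\theta)]-\mathcal{E}_{n}(\theta_{\mathcal{I}}^{\ast})$ I would use that $\theta\mapsto g_{\theta}(\mathbf{x})$ is Lipschitz on the cube (clipping being non-expansive), with a constant that grows like a power of $Br$ whose exponent is proportional to $L$; taking $h$ polynomially small in $n$ then makes the discretization error negligible while contributing a factor of order $L\log(p\lor n)$ to $\log(1/h)$. This is exactly the origin of the $|\mathcal{I}|L\log(p\lor n)$ term, and balancing $h$ so that the error is swallowed while this term stays optimal is the most delicate point of the construction.

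The steps above control the posterior average, so a final step passes to the single draw $\hat{f}_{\lambda}=f_{\hat{\theta}_{\lambda}}$. Since the argument in fact bounds an exponential moment $\E_{\Pi_{\lambda}}[\exp(\kappa\lambda\,\mathcal{E}(\theta))]$, applying Markov's inequality to this exponential in the posterior draw yields the stated bound for $\hat{f}_{\lambda}$, the failure probability split between data and draw producing the $\log(2/\delta)$ rather than $\log(1/\delta)$. The main obstacle throughout is the first step: obtaining a self-bounding exponential-moment inequality under merely sub-Gaussian, unbounded noise, since this is what simultaneously delivers the oracle-type multiplicative constant and keeps the remainder at the parametric order $1/n$.
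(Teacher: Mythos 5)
Your proposal follows essentially the same route as the paper: a self-bounding Bernstein-type exponential moment inequality for the excess loss, combined with the Donsker--Varadhan/Gibbs variational formula and a Chernoff--Markov step over the joint law of data and posterior draw, and finally a competitor given by the uniform law on a polynomially small cube around $\theta_{\mathcal{I}}^{\ast}$, whose Kullback--Leibler cost splits into the combinatorial term $\vert\mathcal{I}\vert\log P$ plus $\vert\mathcal{I}\vert\log(B/h)$ and whose excess-risk cost is controlled by the Lipschitz dependence of the network on its parameters. The only cosmetic deviations are that you bound the posterior average before passing to the draw (the paper treats the draw directly) and that you discretize the \emph{empirical} excess risk near the oracle, whereas the paper discretizes the population excess risk via Cauchy--Schwarz, which avoids having to control the unbounded $Y_i$ in the discretization error.
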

\begin{rem}
$\Xi_0$ can explicitly be chosen as $\Xi_0=16(C^2+\sigma^2)+ 16C(\Gamma\lor(2C))$, the dependence of $\Xi_1$ on $C,\Gamma,\sigma$ is at most quadratic and $n\ge n_0\coloneqq 2\lor r\lor b\lor K$ is sufficiently large.
\end{rem}
The right-hand side of \ref{eq:oracleIneq} can be interpreted similarly
to the classical bias-variance decomposition in nonparametric statistics.
The first term $\mathcal{E}(f_{\theta_{\mathcal{I}}^{\ast}})=\E[(f_{\theta_{\mathcal{I}}^{\ast}}(\mathbf{X})-f(\mathbf{X}))^{2}]$
quantifies the approximation error while second term is an upper bound
for the stochastic error. In particular, we recover $\frac{\vert\mathcal{I}\vert}{n}\log p$
as the typical error term for estimating high-dimensional vectors
with sparsity $\vert\mathcal{I}\vert$. The factor $4$ in the upper
bound can be improved to $(1+\eta)$ for any $\eta>0$ at the cost
of a larger constant $\Xi_{1}$. 

\ref{thm:oracleinequality} is in line with classical PAC-Bayes oracle
inequalities, see \citet{Alquier2021}. \citet{cherief2020} has obtained
a similar oracle inequality for a variational approximation of the
Gibbs posterior distribution, but without the minimum over the active
sets. For penalized empirical risk minimization, \citet{taheriEtAl2021}
have obtained another oracle inequality with a different dependence
on the depth $L$.

The $1-\delta$ probability takes into account the randomness of the data and of the estimate.
Denoting
\begin{equation}
r_{n,p}^{2}\coloneqq\min_{\mathcal{I}}\Big(4\Vert f_{\theta_{\mathcal{I}}^{*}}-f\Vert_{L^{2}(\P^{\mathbf{X}})}^{2}+\frac{\Xi_{1}}{n}\vert\mathcal{I}\vert L\log(p\lor n)\Big),\label{eq:r2}
\end{equation}
we can rewrite \ref{eq:oracleIneq} as
\[
\E\big[\Pi_{\lambda}\big(\Vert f_{\hat{\theta}_{\lambda}}-f\Vert_{L^{2}(\P^{\mathbf{X}})}^{2}>r_{n,p}^{2}+t^{2}\,\big\vert\,\mathcal{D}_{n}\big)\big]\le2\e^{-nt^{2}/\Xi_{1}},\qquad t>0,
\]
which is a \emph{contraction rate} result in terms of a frequentist
analysis of the nonparametric Bayes method.

An immediate consequence of the previous theorem is an analogous oracle
inequality of the posterior mean $\bar{f}_{\lambda}$ from \ref{eq:postMean}.
\begin{cor}[Posterior mean]
\label{cor:mean}Under the conditions of \ref{thm:oracleinequality}
we have with probability of at least $1-\delta$ that
\[
\mathcal{E}(\bar{f}_{\lambda})\le\min_{\mathcal{I}}\Big(4\mathcal{E}(f_{\theta_{\mathcal{I}}^{\ast}})+\frac{\Xi_{2}}{n}\big(\vert\mathcal{I}\vert L\log(p\lor n)+\log(2/\delta)\big)\Big)
\]
with a constant $\Xi_{2}$ only depending on $C,\Gamma,\sigma$.
\end{cor}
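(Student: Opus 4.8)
The plan is to deduce the statement for the posterior mean $\bar{f}_{\lambda}$ from \ref{eq:postMean} from the one for the randomised estimator $\hat{f}_{\lambda}$ by exploiting the convexity of the squared $L^2(\P^{\mathbf X})$-distance. Writing $\bar{f}_{\lambda}(\mathbf x)=\int f_\theta(\mathbf x)\,\Pi_\lambda(\d\theta\mid\mathcal{D}_n)$ and applying Jensen's inequality pointwise in $\mathbf x$ to the convex map $t\mapsto t^2$, I first obtain
\[
\mathcal{E}(\bar{f}_{\lambda})=\big\Vert\bar{f}_{\lambda}-f\big\Vert_{L^2(\P^{\mathbf X})}^2\le\int\big\Vert f_\theta-f\big\Vert_{L^2(\P^{\mathbf X})}^2\,\Pi_\lambda(\d\theta\mid\mathcal{D}_n)=\E\big[\mathcal{E}(\hat{f}_{\lambda})\,\big\vert\,\mathcal{D}_n\big].
\]
The interchange of the two integrations is justified by Fubini's theorem, since both $f_\theta$ and $f$ are bounded by $C$ (by clipping and \ref{assu:bounded}), whence $\mathcal{E}(f_\theta)\le 4C^2$ uniformly in $\theta$. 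It therefore suffices to bound the posterior average excess risk $\int\mathcal{E}(f_\theta)\,\Pi_\lambda(\d\theta\mid\mathcal{D}_n)$ with probability at least $1-\delta$.

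The key observation is that this posterior average is precisely the quantity controlled by the PAC-Bayes argument underlying \ref{thm:oracleinequality}. That argument proceeds by comparing the Gibbs posterior $\Pi_\lambda$ with the restricted uniform prior $\Pi_{\mathcal{I}}$ on $\mathcal{S}_{\mathcal{I}}$, and its natural output is, with probability at least $1-\delta$ over the draw of $\mathcal{D}_n$, the bound
\[
\int\mathcal{E}(f_\theta)\,\Pi_\lambda(\d\theta\mid\mathcal{D}_n)\le\min_{\mathcal{I}}\Big(4\mathcal{E}(f_{\theta_{\mathcal{I}}^{\ast}})+\frac{\Xi_2}{n}\big(\vert\mathcal{I}\vert L\log(p\lor n)+\log(2/\delta)\big)\Big),
\]
from which the statement for the single draw $\hat{f}_{\lambda}$ is subsequently obtained. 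Combining this with the Jensen step above yields the corollary with the same right-hand side and a constant $\Xi_2$ of the same order as $\Xi_1$, the slight change of constant absorbing the boundedness bookkeeping. I would simply record that no new estimate is needed beyond Jensen's inequality once the posterior-average version of the bound is isolated.

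The main obstacle, and the reason one cannot treat \ref{thm:oracleinequality} purely as a black box, is the high-probability form in $\delta$. The contraction reformulation only provides the joint tail $\E[\Pi_\lambda(\mathcal{E}(f_\theta)>r_{n,p}^2+t^2\mid\mathcal{D}_n)]\le 2\e^{-nt^2/\Xi_1}$; integrating this against $\d(t^2)$ and using $\mathcal{E}(f_\theta)\le 4C^2$ only controls the data-expectation $\E[\int\mathcal{E}\,\d\Pi_\lambda]\le r_{n,p}^2+2\Xi_1/n$, and a subsequent Markov step over $\mathcal{D}_n$ would degrade the logarithmic $\log(2/\delta)$ term into a polynomial $1/\delta$ term. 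Hence the genuine $1-\delta$ concentration over the data must be taken from the PAC-Bayes high-probability bound on the posterior average itself, after which Jensen's inequality is the only additional ingredient.
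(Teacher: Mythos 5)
Your proof is correct, but it takes a genuinely different route from the paper's. Both arguments share the first step: by Jensen's inequality, $\mathcal{E}(\bar f_\lambda)\le\E\big[\Vert f_{\hat\theta_\lambda}-f\Vert_{L^2(\P^{\mathbf{X}})}^2\,\big\vert\,\mathcal{D}_n\big]=\int\mathcal{E}\,\d\Pi_\lambda(\cdot\mid\mathcal{D}_n)$. From there you re-open the PAC-Bayes machinery and invoke a posterior-average version of the oracle inequality. This is legitimate, and it does follow from the paper's proof of \ref{prop:mainpaclemma} with a one-line modification: applying Jensen's inequality with respect to $\hat\theta\sim\Pi_\lambda(\cdot\mid\mathcal{D}_n)$ inside the exponential-moment bound (before the indicator trick, noting that $-\lambda\mathcal{E}_n(\hat\theta)+\lambda R_n(\hat\theta)=\lambda R_n(f)$ does not depend on $\hat\theta$) yields $(1-C_{n,\lambda})\int\mathcal{E}\,\d\Pi_\lambda\le -R_n(f)-\lambda^{-1}\log D_\lambda+\lambda^{-1}\log\delta^{-1}$ with probability $1-\delta$ over the data alone; the remainder of the proof of \ref{thm:oracleinequality} (the choice $\rho_{\mathcal{I},\eta}$, \ref{lem:Lipschitz}, \ref{lem:aux_a}) then goes through verbatim. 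Since \ref{prop:mainpaclemma} and \ref{thm:oracleinequality} as stated only control a single posterior draw, a complete write-up must spell out this modification rather than assert it as the ``natural output.''

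Your closing impossibility claim, however, is wrong, and it is precisely what the paper's proof circumvents: the paper does keep \ref{thm:oracleinequality} as a black box in its contraction form and does use Markov's inequality over $\mathcal{D}_n$ without losing the logarithm. The trick is the layer-cake decomposition
\[
\E\big[\Vert f_{\hat\theta_\lambda}-f\Vert_{L^2(\P^{\mathbf{X}})}^2\,\big\vert\,\mathcal{D}_n\big]\le r_{n,p}^2+\tfrac{\Xi_1}{n}\log(2/\delta)+\int_{\frac{\Xi_1}{n}\log(2/\delta)}^{\infty}\Pi_\lambda\big(\Vert f_{\hat\theta_\lambda}-f\Vert_{L^2(\P^{\mathbf{X}})}^2>r_{n,p}^2+t\,\big\vert\,\mathcal{D}_n\big)\,\d t,
\]
followed by Markov applied to the truncated tail integral with the \emph{fixed} threshold $\Xi_1/n$: by Fubini and the contraction bound its expectation is at most $\int_{\frac{\Xi_1}{n}\log(2/\delta)}^\infty 2\e^{-nt/\Xi_1}\,\d t=\Xi_1\delta/n$, so the bad event has probability at most $\delta$. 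The $\delta$-dependence sits in the truncation point, not in the Markov threshold, which is why $\log(2/\delta)$ survives. In short: your route pays by having to re-derive the average-form PAC-Bayes bound; the paper's route pays with this truncation argument but can cite \ref{thm:oracleinequality} as stated. Both give the corollary with constants of the same order.
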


The infimum over all $\mathcal{I}$ in the oracle inequalities shows
that the estimator may choose a sparse neural network for approximating
$f$ in a data-driven way. In particular, $\hat{f}_{\lambda}$ as
well as $\bar{f}_{\lambda}$ adapt to the unknown regularity and a
potentially lower dimensional structure of $f$. Using the approximation
properties of neural networks, the oracle inequality yields the optimal
rate of convergence (up to a logarithmic factor) over the following
class of hierarchical functions:
\begin{align*}
\mathcal{H}(q,\mathbf{d},\mathbf{t},\beta,C_{0}) & \coloneqq\Big\{ g_{q}\circ\dots\circ g_{0}\colon[0,1]^{p}\to\R\,\Big\vert\,g_{i}=(g_{ij})_{j}^{\top}\colon[a_{i},b_{i}]^{d_{i}}\to[a_{i+1},b_{i+1}]^{d_{i+1}},\\
 & \qquad\qquad\qquad\qquad\qquad\qquad\qquad g_{ij}\text{ depends on at most \ensuremath{t_{i}} arguments,}\\
 & \qquad\qquad\qquad\qquad\qquad\qquad\qquad g_{ij}\in\mathcal{C}_{t_{i}}^{\beta_{i}}([a_{i},b_{i}]^{t_{i}},C_{0}),\text{ for some }\vert a_{i}\vert,\vert b_{i}\vert\le C_{0}\Big\},
\end{align*}
where $\mathbf{d}\coloneqq(p,d_{1},\dots,d_{q},1)\in\N^{q+2},\mathbf{t}\coloneqq(t_{0},\dots,t_{q})\in\N^{q+1},\beta\coloneqq(\beta_{0},...,\beta_{q})\in(0,\infty)^{q+1}$
and where $\mathcal{C}_{t_{i}}^{\beta_{i}}([a_{i},b_{i}]^{t_{i}},C_{0})$
denote classical H\"older balls with H\"older regularity $\beta_{i}>0$ and radius $C_0\ge 1$.
\ref{thm:oracleinequality} reveals the following adaptive version
of the upper bound by \citet{schmidthieber2020}: 
\begin{prop}[Rates of convergence]
\label{prop:rate}
Let $\log p\le n/(\log^2 n)$ and $\mathbf{X}\in[0,1]^{p}$. In the situation of \ref{thm:oracleinequality}, there exists a network
architecture $(L,r)=(C_{1}\lceil\log_{2}n\rceil,C_2n)$
with $C_{1}$ and $C_{2}$ only depending on upper bounds for $q,\vert(d_{1},\dots,d_{q})\vert_{\infty},\vert\mathbf{t}\vert_{\infty},\vert\beta\vert_{\infty}$
and $C_{0}$ such that the estimators $\hat{f}_{\lambda}$ and $\bar{f}_{\lambda}$
yield an excess risk for sufficiently large n uniformly over all hierarchical
functions $f\in\mathcal{H}(q,\mathbf{d},\mathbf{t},\beta,C_{0})$
of at most 
\begin{align*}
\mathcal{E}(\hat{f}_{\lambda}) & \le\Xi_{3}\Big(\frac{\log(p\lor n)\log^{2}(n)}{n}\Big)^{2\beta^{\ast}/(2\beta^{*}+t^{\ast})}+\Xi_{3}\frac{\log(2/\delta)}{n}\qquad\text{and}\\
\mathcal{E}(\bar{f}_{\lambda}) & \le\Xi_{4}\Big(\frac{\log(p\lor n)\log^{2}(n)}{n}\Big)^{2\beta^{\ast}/(2\beta^{*}+t^{\ast})}+\Xi_{4}\frac{\log(2/\delta)}{n}
\end{align*}
with probability of at least $1-\delta$, respectively, where
$\beta^{*}$ and $t^{*}$ are given by 
\[
\beta^\ast\coloneqq\beta^\ast_{i^\ast},\qquad t^\ast\coloneqq t^\ast_{i^\ast}\qquad\text{for}\qquad i^\ast\in \argmin{i=0,\dots,q}\frac{2\beta_{i}^{*}}{2\beta_{i}^{*}+t^{\ast}_{i}}\qquad\text{and}\qquad 
\beta_{i}^{\ast}\coloneqq\beta_{i}\prod_{l=i+1}^{q}(\beta_{l}\land 1).
\]
The constants $\Xi_{3}$ and $\Xi_{4}$ only depend on $q,(d_{1},\dots,d_{q}),\mathbf{t},\beta$
and $C_{0}$ and $C,\Gamma,\sigma$.
\end{prop}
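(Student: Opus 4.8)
The plan is to feed the approximation theory for sparse ReLU networks of \citet{schmidthieber2020} into the oracle inequality \ref{thm:oracleinequality} and to balance the resulting approximation and stochastic errors. Since $\theta_{\mathcal{I}}^{\ast}$ minimizes $R$ over $\mathcal{S}_{\mathcal{I}}$, for every active set $\mathcal{I}$ and every $\theta$ supported on $\mathcal{I}$ with $\theta\in[-B,B]^{P}$ we have $\mathcal{E}(f_{\theta_{\mathcal{I}}^{\ast}})\le\mathcal{E}(f_{\theta})\le\Vert f_{\theta}-f\Vert_{\infty}^{2}$. Hence \ref{thm:oracleinequality} reduces the problem to the purely deterministic task of constructing, for a given sparsity budget, a clipped network that approximates $f\in\mathcal{H}(q,\mathbf{d},\mathbf{t},\beta,C_{0})$ well in the supremum norm, and then optimizing over that budget.

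For the approximation step I would invoke the network approximation theorem of \citet{schmidthieber2020}: for integer parameters $m$ and $N$ it yields a sparse network of depth $\lesssim m$ and width $\lesssim N$, with $s\lesssim Nm$ active weights, all bounded in absolute value by one (hence contained in $[-B,B]^{P}$ since $B\ge1$), such that $\Vert\tilde{g}-f\Vert_{\infty}\lesssim N2^{-m}+\max_{i}N^{-\beta_{i}^{\ast}/t_{i}}$, where $\beta_{i}^{\ast}=\beta_{i}\prod_{l>i}(\beta_{l}\land1)$ encodes the composite H\"older structure. Choosing $m\asymp\log_{2}n$ annihilates the first term. Because $\Vert f\Vert_{\infty}\le C_{0}\le C$, clipping $\tilde{g}$ at level $C$ only decreases the error, and padding the (varying-width) construction with inactive connections embeds it into the constant-width class $\mathcal{G}(p,L,r)$ with $L=C_{1}\lceil\log_{2}n\rceil$ and $r=C_{2}n$, provided $C_{1},C_{2}$ are large enough in terms of $q,\vert\mathbf{d}\vert_{\infty},\vert\mathbf{t}\vert_{\infty},\vert\beta\vert_{\infty},C_{0}$. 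This produces an active set $\mathcal{I}$ with $\vert\mathcal{I}\vert\asymp N\log n$ and $\mathcal{E}(f_{\theta_{\mathcal{I}}^{\ast}})\lesssim\max_{i}N^{-2\beta_{i}^{\ast}/t_{i}}$.

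Inserting this $\mathcal{I}$ into \ref{thm:oracleinequality} and using $L\asymp\log n$, $\vert\mathcal{I}\vert\asymp N\log n$ gives, with probability at least $1-\delta$,
\[
\mathcal{E}(\hat{f}_{\lambda})\lesssim\max_{i}N^{-2\beta_{i}^{\ast}/t_{i}}+\frac{N\log^{2}n\,\log(p\lor n)}{n}+\frac{\log(2/\delta)}{n}.
\]
Equating, for each level $i$, the term $N^{-2\beta_{i}^{\ast}/t_{i}}$ with $N\log^{2}n\,\log(p\lor n)/n$ yields $N\asymp(n/(\log^{2}n\,\log(p\lor n)))^{t_{i}/(2\beta_{i}^{\ast}+t_{i})}$ and the balanced value $(\log^{2}n\,\log(p\lor n)/n)^{2\beta_{i}^{\ast}/(2\beta_{i}^{\ast}+t_{i})}$; minimizing over $i$ selects $i^{\ast}$ and gives the claimed exponent $2\beta^{\ast}/(2\beta^{\ast}+t^{\ast})$. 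The hypothesis $\log p\le n/\log^{2}n$ ensures that the optimal $N$ lies between a constant and $C_{2}n$, so the architecture is admissible and the rate is genuinely $o(1)$. The bound for $\bar{f}_{\lambda}$ then follows verbatim, invoking \ref{cor:mean} in place of \ref{thm:oracleinequality}.

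The main obstacle is the faithful translation of Schmidt-Hieber's approximation result into the present framework: confirming that the constructed network fits the weight-bounded, constant-width, clipped class $\mathcal{F}(p,L,r,C)$ with the stated sparsity count $\vert\mathcal{I}\vert\asymp N\log n$, and that the effective smoothness $\beta_{i}^{\ast}$ propagates correctly through the composition. Once the pair consisting of approximation rate $\max_{i}N^{-2\beta_{i}^{\ast}/t_{i}}$ and sparsity $\asymp N\log n$ is secured, the remaining bias--variance trade-off is a routine one-parameter optimization.
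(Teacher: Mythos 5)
Your proposal is correct and follows essentially the same route as the paper's proof: both feed Schmidt-Hieber's sparse ReLU approximation construction (depth $\asymp\log_2 n$, sparsity $\asymp M\log n$, weights bounded by $1\le B$) into \ref{thm:oracleinequality} via the bound $\mathcal{E}(f_{\theta_{\mathcal{I}}^{\ast}})\le\Vert g_{\theta}-f\Vert_{L^{\infty}([0,1]^p)}^{2}$, use the enlarging/depth-synchronisation embedding to fit the constant-width class $(L,r)=(C_1\lceil\log_2 n\rceil, C_2 n)$, balance the approximation term against the $\vert\mathcal{I}\vert L\log(p\lor n)/n$ term, and treat $\bar{f}_{\lambda}$ by \ref{cor:mean}. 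The only cosmetic difference is that the paper fixes the width parameter $M$ at the balanced value from the start, whereas you optimize over $N$ at the end; your per-level balancing is legitimate because the argmin of the exponent $2\beta_i^{\ast}/(2\beta_i^{\ast}+t_i)$ coincides with the argmin of $\beta_i^{\ast}/t_i$, so the binding index is indeed $i^{\ast}$.
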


For a fixed dimension $p$ it has been proved by \citet{schmidthieber2020}
that this rate is indeed optimal in a minimax sense up to a logarithmic
factor in $n$. Studying classical H\"older balls $\mathcal{C}_{p}^{\beta}([0,1]^{p},C_{0})$,
a contraction rate of order $n^{-2\beta/(2\beta+p)}$ has been derived by \citet{polsonRockova2018} and \citet{cherief2020}, where \citet{polsonRockova2018}
have used a hierarchical prior to obtain adaptivity with respect to
$\beta$ and the result by \citet{cherief2020} is non-adaptive, but
incorporates a variational Bayes method.

\section{Proofs\label{sec:Proofs}}
We first provide some preliminary results in \ref{sec:PACbound} before we prove the oracle inequality in \ref{sec:ProofOracle}. All remaining proofs can be found in the last two subsections.
\subsection{A PAC-Bayes bound}\label{sec:PACbound}

Let $\mu,\nu$ be probability measures on a measurable space $(E,\mathscr{A})$.
The \emph{Kullback-Leibler divergence} of $\mu$ with respect to $\nu$
is defined via 
\[
\KL(\mu\mid\nu)\coloneqq\begin{cases}
\int\log\big(\frac{\d\mu}{\d\nu}\big)\,\d\mu, & \text{if }\mu\ll\nu\\
\infty, & \text{otherwise}
\end{cases}.
\]
The following classical lemma is a key ingredient for PAC-Bayes bounds.
A proof can be found in \citet[p. 159]{catoni2004} or \citet{Alquier2021}.
\begin{lem}
\label{lem:classicallemma}Let $h\colon E\to\R$ be a measurable function
such that $\int\exp\circ h\,\d\mu<\infty$. With the convention $\infty-\infty=-\infty$,
it then holds that 
\begin{equation}
\log\Big(\int\exp\circ h\,\d\mu\Big)=\sup_{\nu}\Big(\int h\,\d\nu-\KL(\nu\mid\mu)\Big),\label{eq:gibbsequality}
\end{equation}
where the supremum is taken over all probability
measures $\nu$ on $(E,\mathscr{A})$. If additionally, $h$ is bounded
from above on the support of $\mu$, then the supremum in \ref{eq:gibbsequality}
is attained for $\nu=g$ with the Gibbs distribution $g$, i.e.\ $\frac{\d g}{\d \mu}:\propto\exp\circ h$.
\end{lem}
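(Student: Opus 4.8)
The plan is to reduce the whole statement to the single algebraic fact that the Kullback--Leibler divergence is non-negative. First I would set $Z\coloneqq\int\e^{h}\,\d\mu\in(0,\infty)$ and introduce the Gibbs measure $g$ through $\frac{\d g}{\d\mu}\coloneqq\e^{h}/Z$. Since this density is strictly positive, $g$ and $\mu$ are equivalent, so for a probability measure $\nu$ one has $\nu\ll\mu$ if and only if $\nu\ll g$, and in that case the chain rule gives $\frac{\d\nu}{\d g}=\frac{\d\nu}{\d\mu}\,\frac{Z}{\e^{h}}$.

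The central step is the identity, valid for every $\nu\ll\mu$ with $\KL(\nu\mid\mu)<\infty$,
\[
\int h\,\d\nu-\KL(\nu\mid\mu)=\log Z-\KL(\nu\mid g),
\]
which I would obtain by expanding
\[
\KL(\nu\mid g)=\int\log\frac{\d\nu}{\d g}\,\d\nu=\int\Big(\log\frac{\d\nu}{\d\mu}+\log Z-h\Big)\,\d\nu=\KL(\nu\mid\mu)+\log Z-\int h\,\d\nu
\]
and rearranging. To justify the rearrangement I would first note that the Fenchel--Young inequality $ab\le\e^{a}+b\log b-b$ applied with $a=h$, $b=\frac{\d\nu}{\d\mu}$ and integrated against $\mu$ yields $\int h\,\d\nu\le Z-1+\KL(\nu\mid\mu)<\infty$, so the positive part of $\int h\,\d\nu$ is finite and no $\infty-\infty$ ambiguity arises. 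Non-negativity of $\KL(\nu\mid g)$ then gives $\int h\,\d\nu-\KL(\nu\mid\mu)\le\log Z$, with equality exactly when $\KL(\nu\mid g)=0$, i.e.\ $\nu=g$.

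It remains to dispose of the degenerate $\nu$ and to reach the supremum in full generality. If $\nu\not\ll\mu$, or if $\nu\ll\mu$ but $\KL(\nu\mid\mu)=\infty$, then the objective equals $-\infty$ under the stated convention, so such $\nu$ never compete. For the lower bound $\sup_{\nu}(\cdot)\ge\log Z$ when $h$ need not be bounded above, I would truncate: put $h_{n}\coloneqq h\land n$, $Z_{n}\coloneqq\int\e^{h_{n}}\,\d\mu$, and let $g_{n}$ be the corresponding Gibbs measure. Using $\KL(g_{n}\mid\mu)=\int h_{n}\,\d g_{n}-\log Z_{n}$ gives $\int h\,\d g_{n}-\KL(g_{n}\mid\mu)=\int(h-h_{n})\,\d g_{n}+\log Z_{n}\ge\log Z_{n}$; since $h_{n}$ is bounded above the value is finite and $\le\log Z$ by the previous step, while $\log Z_{n}\uparrow\log Z$ by monotone convergence, so a squeeze identifies the supremum as $\log Z$. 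Finally, under the extra hypothesis $h\le M$ on $\supp\mu$, the bound $h^{+}\e^{h}\le M^{+}\e^{h}$ gives $\mu$-integrability of the positive part while $h\e^{h}\ge-1/\e$ controls the negative part, so $\int h\,\d g<\infty$, $\KL(g\mid\mu)=\int h\,\d g-\log Z$ is finite, and the identity at $\nu=g$ shows the supremum is attained there.

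The main obstacle is not the algebra but the bookkeeping of the infinite cases: ensuring the key identity is invoked only where $\int h\,\d\nu$ is not $+\infty$ (guaranteed by the Fenchel--Young bound), matching the convention $\infty-\infty=-\infty$ so that measures with $\nu\not\ll\mu$ or infinite divergence are harmlessly excluded, and recognising that upper boundedness of $h$ on $\supp\mu$ is precisely what renders $\int h\,\d g$ finite and hence makes the maximiser $g$ admissible rather than merely approximable by the truncations $g_{n}$.
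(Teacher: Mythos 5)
Your proof is correct. Note that the paper does not prove this lemma at all: it cites Catoni (2004, p.~159) and Alquier (2021), and the argument in those references is essentially the one you give, namely completing the Kullback--Leibler divergence with respect to the Gibbs measure via the identity $\int h\,\d\nu-\KL(\nu\mid\mu)=\log Z-\KL(\nu\mid g)$ and invoking $\KL(\nu\mid g)\ge0$ with equality iff $\nu=g$. Beyond the standard argument you add two refinements that are worth keeping: the Fenchel--Young bound $ab\le\e^{a}+b\log b-b$, which guarantees that $\int h\,\d\nu$ has finite positive part whenever $\KL(\nu\mid\mu)<\infty$ and thus legitimizes the rearrangement; and the truncation $h_{n}=h\land n$ with Gibbs measures $g_{n}$, which is genuinely needed to identify the supremum as $\log\int\exp\circ h\,\d\mu$ when $h$ is unbounded above --- in that case $\int h\,\d g$ and $\KL(g\mid\mu)$ can both be infinite, so $\nu=g$ itself yields $-\infty$ under the stated convention and the supremum is approached but not attained, which is precisely why the lemma requires boundedness from above on $\supp\mu$ for attainment.
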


Note that no generality is lost by considering only those probability
measures $\nu$ on $(E,\mathscr{A})$ such that $\nu\ll\mu$ and thus
\[
\log\Big(\int\exp\circ h\,\d\mu\Big)=-\inf_{\nu\ll\mu}\Big(\KL(\nu\mid\mu)-\int h\,\d\nu\Big).
\]

Next we state a Bernstein type inequality for the excess risk $\mathcal{E}(\theta)\coloneqq\mathcal{E}(f_{\theta})=R(\theta)-\E_{(\mathbf{X},Y)}[(Y-f(\mathbf{X}))^{2}]$.
Its proof is given in \ref{sec:RemainingProofs}.
\begin{lem}
\label{lem:Bernstein}Set $\mathcal{E}(\theta)\coloneqq R(\theta)-R(f)$
and $\mathcal{E}_{n}(\theta)\coloneqq R_{n}(\theta)-R_{n}(f)$. For
$C_{n,\lambda}\coloneqq\frac{\lambda}{n}\frac{8(C^{2}+\sigma^{2})}{1-w\lambda/n}$
and $w\coloneqq16C(\Gamma\lor2C)$, we have for all $\lambda\in[0,n/w)$
that
\[
\max\big\{\E\big[\exp\big(\lambda\big(\mathcal{E}(\theta)-\mathcal{E}_{n}(\theta)\big)\big)\big],\E\big[\exp\big(\lambda\big(\mathcal{E}_{n}(\theta)-\mathcal{E}(\theta)\big)\big)\big]\big\}\le\exp\big(C_{n,\lambda}\lambda\mathcal{E}(\theta)\big).
\]
\end{lem}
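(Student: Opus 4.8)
The plan is to reduce the claim to a one-observation moment generating function bound via independence, and then to verify a Bernstein moment condition for a single summand whose variance proxy is proportional to the excess risk $\mathcal{E}(\theta)$ itself.

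First I would rewrite the per-sample increment. Writing $a_{i}\coloneqq f(\mathbf{X}_{i})-f_\theta(\mathbf{X}_{i})$ and using $Y_{i}-f(\mathbf{X}_{i})=\varepsilon_{i}$, a direct expansion gives
\[
\big(Y_{i}-f_\theta(\mathbf{X}_{i})\big)^{2}-\big(Y_{i}-f(\mathbf{X}_{i})\big)^{2}=2a_{i}\varepsilon_{i}+a_{i}^{2}\eqqcolon Z_{i},
\]
so that $\mathcal{E}_{n}(\theta)=\frac{1}{n}\sum_{i}Z_{i}$, while $\E[Z_{i}\mid\mathbf{X}_{i}]=a_{i}^{2}$ by $\E[\varepsilon_{i}\mid\mathbf{X}_{i}]=0$ and hence $\E[Z_{i}]=\E[a_{i}^{2}]=\mathcal{E}(\theta)$. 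Setting $V_{i}\coloneqq Z_{i}-\mathcal{E}(\theta)$, these are i.i.d.\ and centred, and since $\mathcal{E}(\theta)-\mathcal{E}_{n}(\theta)=-\frac{1}{n}\sum_{i}V_{i}$, independence yields
\[
\E\big[\exp(\pm\lambda(\mathcal{E}(\theta)-\mathcal{E}_{n}(\theta)))\big]=\big(\E[\exp(\mp\tfrac{\lambda}{n}V_{1})]\big)^{n}.
\]
It therefore suffices to bound $\E[\exp(\pm\mu V_{1})]$ with $\mu\coloneqq\lambda/n$, raise to the $n$-th power, and observe that $n\mu^{2}=\lambda^{2}/n$ and $w\mu=w\lambda/n$ reproduce exactly the factor $C_{n,\lambda}\lambda\mathcal{E}(\theta)$.

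The core step is a Bernstein moment bound for $V_{1}$. I would split $V_{1}=2a_{1}\varepsilon_{1}+(a_{1}^{2}-\E[a_{1}^{2}])$ and estimate $\E[|V_{1}|^{k}]\le 2^{k-1}\big(\E[|2a_{1}\varepsilon_{1}|^{k}]+\E[|a_{1}^{2}-\E a_{1}^{2}|^{k}]\big)$. For the first part I would condition on $\mathbf{X}_{1}$, invoke the sub-Gaussian moment bound of \ref{assu:bounded} and the estimate $|a_{1}|^{k}\le a_{1}^{2}(2C)^{k-2}$, which is valid because clipping gives $|f_\theta|\le C$ and $\|f\|_{\infty}\le C$, hence $|a_{1}|\le 2C$; this produces a factor $\E[a_{1}^{2}]=\mathcal{E}(\theta)$. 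For the bounded second part I would use $|a_{1}^{2}-\E a_{1}^{2}|\le 4C^{2}$ together with $\Var(a_{1}^{2})\le\E[a_{1}^{4}]\le 4C^{2}\mathcal{E}(\theta)$. Collecting terms gives $\E[|V_{1}|^{k}]\le\frac{k!}{2}v^{2}b^{k-2}$ with $v^{2}=8(C^{2}+\sigma^{2})\mathcal{E}(\theta)$ and $b=8C(\Gamma\lor C)$.

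Finally I would feed this into the classical Bernstein estimate $\E[\exp(\pm\mu V_{1})]\le\exp\big(\frac{\mu^{2}v^{2}/2}{1-b\mu}\big)$, valid for $\mu<1/b$ (obtained by summing the moment series $1+\sum_{k\ge2}\frac{\mu^{k}}{k!}\E[|V_{1}|^{k}]$), and compare constants: since $b\le w=16C(\Gamma\lor2C)$ and $v^{2}/2=4(C^{2}+\sigma^{2})\mathcal{E}(\theta)\le 8(C^{2}+\sigma^{2})\mathcal{E}(\theta)$, the exponent is dominated by $\frac{8(C^{2}+\sigma^{2})\mu^{2}}{1-w\mu}\mathcal{E}(\theta)$ on the range $\lambda\in[0,n/w)$, for both signs. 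Raising to the $n$-th power then gives precisely $\exp(C_{n,\lambda}\lambda\mathcal{E}(\theta))$. The main obstacle is the third step: a Bernstein- rather than Hoeffding-type bound requires the variance proxy $v^{2}$ to be proportional to $\mathcal{E}(\theta)=\E[a_{1}^{2}]$ (this localisation is exactly what later permits the multiplicative factor $4$ in front of the approximation error in \ref{thm:oracleinequality}), so one must extract a full factor $a_{1}^{2}$ from every absolute moment $\E[|V_{1}|^{k}]$ while retaining the geometric decay $b^{k-2}$; the inequality $|a_{1}|^{k}\le a_{1}^{2}(2C)^{k-2}$, combined with the clipping bound on $f_\theta$, is the device that accomplishes this.
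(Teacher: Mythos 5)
Your proposal is correct and follows essentially the same route as the paper: the identical decomposition $Z_i = 2a_i\varepsilon_i + a_i^2$ with $a_i = f(\mathbf{X}_i)-f_\theta(\mathbf{X}_i)$, the same verification of Bernstein-type moment conditions whose variance proxy is proportional to $\mathcal{E}(\theta)$ (using sub-Gaussianity of $\varepsilon$ and $\vert a_i\vert\le 2C$), and finally a Bernstein moment-generating-function bound. The only difference is presentational: the paper applies Massart's one-sided Bernstein inequality directly to the uncentered sum $\frac{1}{n}\sum_i Z_i$ with moments $\E[(Z_i)_+^k]$, whereas you center each summand, bound absolute moments, and re-derive the MGF estimate by summing the series; your constants ($v^2 = 8(C^2+\sigma^2)\mathcal{E}(\theta)$, $b = 8C(\Gamma\lor C)\le w$) are compatible with the paper's and yield the claimed bound on the full range $\lambda\in[0,n/w)$.
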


Based on these two lemmas, we can verify a PAC-Bayes bound for the
excess risk. The basic proof strategy is standard in the PAC-Bayes
literature, see e.g.\ \citet{alquier2013}.
\begin{prop}[PAC-Bayes bound]
\label{prop:mainpaclemma} For any $\mathcal{D}_{n}$-dependent (in
a measurable way) probability measure $\rho\ll\Pi$ and any $\lambda\in(0,n/w)$
such that $C_{n,\lambda}\le1/2$, we have
\begin{equation}
\mathcal{E}(\widehat{\theta}_{\lambda})\le3\int\mathcal{E}\,\d\rho+\frac{4}{\lambda}\big(\KL(\rho\mid\Pi)+\log(2/\delta)\big)\label{eq:mainpacbound}
\end{equation}
with probability
of at least $1-\delta$. 
\end{prop}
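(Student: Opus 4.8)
The plan is to run the standard PAC-Bayes machinery --- an exponential-moment estimate combined with the change-of-measure identity of \ref{lem:classicallemma} --- but with the twist that we must control the \emph{single posterior draw} $\widehat{\theta}_{\lambda}$ rather than a posterior average. I would split the budget $\delta$ into two halves and construct two events, each of probability at least $1-\delta/2$, whose intersection yields \ref{eq:mainpacbound} after a union bound. Throughout I use that the Gibbs density satisfies $\frac{\d\Pi_{\lambda}(\cdot\mid\mathcal{D}_{n})}{\d\Pi}(\theta)=\exp(-\lambda\mathcal{E}_{n}(\theta))\big/\int\exp(-\lambda\mathcal{E}_{n})\,\d\Pi$, since the $\theta$-independent factor $\e^{-\lambda R_{n}(f)}$ cancels, so that $R_{n}$ may be replaced by $\mathcal{E}_{n}$ everywhere.

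For the first event I would study the exponential moment of $\lambda(1-C_{n,\lambda})\mathcal{E}(\widehat{\theta}_{\lambda})-\lambda\mathcal{E}_{n}(\widehat{\theta}_{\lambda})-\log\frac{\d\Pi_{\lambda}(\cdot\mid\mathcal{D}_{n})}{\d\Pi}(\widehat{\theta}_{\lambda})$ under the joint law of $(\mathcal{D}_{n},\widehat{\theta}_{\lambda})$. Taking the conditional expectation over $\widehat{\theta}_{\lambda}\sim\Pi_{\lambda}(\cdot\mid\mathcal{D}_{n})$ turns the $-\log$-density term into the Radon--Nikodym derivative $\d\Pi/\d\Pi_{\lambda}(\cdot\mid\mathcal{D}_{n})$ and collapses the posterior integral back to the prior integral $\int\exp(\lambda(1-C_{n,\lambda})\mathcal{E}-\lambda\mathcal{E}_{n})\,\d\Pi$; then Fubini together with the $\mathcal{E}-\mathcal{E}_{n}$ direction of \ref{lem:Bernstein}, which gives $\E[\exp(-\lambda\mathcal{E}_{n}(\theta))]\le\exp(-(1-C_{n,\lambda})\lambda\mathcal{E}(\theta))$, bounds the whole joint expectation by $1$. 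A Markov inequality at level $2/\delta$ produces an event of probability $1-\delta/2$ on which, after the empirical terms cancel, $\lambda(1-C_{n,\lambda})\mathcal{E}(\widehat{\theta}_{\lambda})\le-\log\int\exp(-\lambda\mathcal{E}_{n})\,\d\Pi+\log(2/\delta)$. Finally \ref{lem:classicallemma} with $h=-\lambda\mathcal{E}_{n}$ bounds the right-hand side by $\lambda\int\mathcal{E}_{n}\,\d\rho+\KL(\rho\mid\Pi)+\log(2/\delta)$ for the given $\rho$.

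For the second event I would pass from the empirical to the true excess risk of $\rho$. Integrating the $\mathcal{E}_{n}-\mathcal{E}$ direction of \ref{lem:Bernstein} against $\Pi$, and applying Fubini and Markov, give with probability $1-\delta/2$ a bound on $\int\exp(\lambda\mathcal{E}_{n}-(1+C_{n,\lambda})\lambda\mathcal{E})\,\d\Pi$; \ref{lem:classicallemma} then yields, \emph{uniformly over all} $\rho\ll\Pi$, the estimate $\lambda\int\mathcal{E}_{n}\,\d\rho\le(1+C_{n,\lambda})\lambda\int\mathcal{E}\,\d\rho+\KL(\rho\mid\Pi)+\log(2/\delta)$. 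Substituting this into the bound from the first event, dividing by $\lambda(1-C_{n,\lambda})$ and using $C_{n,\lambda}\le1/2$, so that $\frac{1+C_{n,\lambda}}{1-C_{n,\lambda}}\le3$ and $\frac{2}{1-C_{n,\lambda}}\le4$, gives exactly \ref{eq:mainpacbound}.

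The main obstacle is the treatment of the single draw in the first event. The uniform-in-$\rho$ validity needed in the second event is automatic, since the Markov step acts on the $\rho$-free integral $\int\exp(\cdots)\,\d\Pi$ before \ref{lem:classicallemma} introduces the (possibly $\mathcal{D}_{n}$-dependent) $\rho$. The delicate point is rather the choice of the correction $-\log\frac{\d\Pi_{\lambda}(\cdot\mid\mathcal{D}_{n})}{\d\Pi}(\widehat{\theta}_{\lambda})$ in the exponent: it is precisely this term that cancels the $\mathcal{E}_{n}$-contributions and lets the posterior expectation over $\widehat{\theta}_{\lambda}$ collapse to a prior integral whose mean is controllable by \ref{lem:Bernstein}, thereby handling the randomness of the draw and of the data in a single stroke.
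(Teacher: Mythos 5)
Your proposal is correct and follows essentially the same route as the paper's proof: the exponential-moment bounds of \ref{lem:Bernstein} integrated against $\Pi$ via Fubini, a change of measure to the Gibbs posterior to control the single draw $\widehat{\theta}_{\lambda}$, \ref{lem:classicallemma} to pass to an arbitrary $\rho\ll\Pi$ with the Kullback--Leibler penalty, a second high-probability event converting $\int\mathcal{E}_{n}\,\d\rho$ into $(1+C_{n,\lambda})\int\mathcal{E}\,\d\rho$, and a union bound with the same constant arithmetic $\frac{1+C_{n,\lambda}}{1-C_{n,\lambda}}\le3$, $\frac{2}{1-C_{n,\lambda}}\le4$. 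Your only deviations are cosmetic: you cancel $\e^{-\lambda R_{n}(f)}$ to work with $\mathcal{E}_{n}$ in place of $R_{n}$ and $D_{\lambda}$, and in the second event you apply Markov to the $\rho$-free prior integral before invoking the variational inequality (making the uniformity in $\rho$ explicit), where the paper uses Jensen's inequality for the fixed data-dependent $\rho$ --- two phrasings of the same argument.
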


\begin{proof}
\ref{lem:Bernstein} yields
\[
\max\big\{\E\big[\exp\big(\lambda(1-C_{n,\lambda})\mathcal{E}(\theta)-\lambda\mathcal{E}_{n}(\theta)-\log\delta^{-1}\big)\big],\E\big[\exp\big(\lambda\mathcal{E}_{n}(\theta)-\lambda(1+C_{n,\lambda})\mathcal{E}(\theta)-\log\delta^{-1}\big)\big]\big\}\le\delta.
\]
Integrating in $\theta$ with respect to the prior probability measure
$\Pi$ and applying Fubini's theorem, we conclude
\begin{align}
\E\Big[\int\exp\big(\lambda(1-C_{n,\lambda})\mathcal{E}(\theta)-\lambda\mathcal{E}_{n}(\theta)-\log\delta^{-1}\big)\,\d\Pi(\theta)\Big] & \le\delta\qquad\text{and}\nonumber \\
\E\Big[\int\exp\big(\lambda\mathcal{E}_{n}(\theta)-\lambda(1+C_{n,\lambda})\mathcal{E}(\theta)-\log\delta^{-1}\big)\,\d\Pi(\theta)\Big] & \le\delta.\label{eq:BersteinConsequence}
\end{align}
For the posterior distribution $\Pi_{\lambda}(\cdot\mid\mathcal{D}_{n})\ll\Pi$
with corresponding Radon-Nikodym density 
\begin{equation}
\frac{\d\Pi_{\lambda}(\theta\mid\mathcal{D}_{n})}{\d\Pi}=D_{\lambda}^{-1}\exp(-\lambda R_{n}(\theta)),\qquad D_{\lambda}\coloneqq\int\exp(-\lambda R_{n}(\theta))\,\d\Pi(\theta)\label{eq:posteriorDensity}
\end{equation}
 with respect to $\Pi$, we obtain
\begin{align*}
\E_{\mathcal{D}_{n},\hat{\theta}\sim\Pi_{\lambda}(\cdot\mid\mathcal{D}_{n})}\big[ & \exp\big(\lambda(1-C_{n,\lambda})\mathcal{E}(\hat{\theta})-\lambda\mathcal{E}_{n}(\hat{\theta})-\log\delta^{-1}+\lambda R_{n}(\hat{\theta})+\log D_{\lambda}\big)\big]\\
= & \;\,\E_{\mathcal{D}_{n},\hat{\theta}\sim\Pi_{\lambda}(\cdot\mid \mathcal{D}_{n})}\Big[\exp\Big(\lambda(1-C_{n,\lambda})\mathcal{E}(\hat{\theta})-\lambda\mathcal{E}_{n}(\hat{\theta})-\log\delta^{-1}-\log\Big(\frac{\d\Pi_{\lambda}(\hat{\theta}\mid\mathcal{D}_{n})}{\d\Pi}\Big)\Big)\Big]\\
= & \;\,\E_{\mathcal{D}_{n}}\Big[\int\exp\big(\lambda(1-C_{n,\lambda})\mathcal{E}(\theta)-\lambda\mathcal{E}_{n}(\theta)-\log\delta^{-1}\big)\,\d\Pi(\theta)\Big]\le\delta.
\end{align*}
Since $\1_{[0,\infty)}(x)\le \e^{\lambda x}$ for all $x\in\R$, we
deduce for $\hat{\theta}\sim\Pi_\lambda(\cdot\mid\mathcal{D}_n)$ with probability not larger than $\delta$ that
\[
\big(1-C_{n,\lambda}\big)\mathcal{E}(\hat{\theta})-\mathcal{E}_{n}(\hat{\theta})+R_{n}(\hat{\theta})-\lambda^{-1}\big(\log\delta^{-1}-\log D_{\lambda}\big)\ge0.
\]
Provided $\big(1-C_{n,\lambda}\big)>0$, we thus have with probability
of at least $1-\delta$:
\[
\mathcal{E}(\hat{\theta})\le\frac{1}{1-C_{n,\lambda}}\big(-R_{n}(f)+\lambda^{-1}\big(\log\delta^{-1}-\log D_{\lambda}\big)\big).
\]
\ref{lem:classicallemma} yields
\begin{equation}
-\log D_{\lambda}=-\log\Big(\int\exp(-\lambda R_{n}(\theta))\,\d\Pi(\theta)\Big)=\inf_{\rho\ll\Pi}\Big(\KL(\rho\mid\Pi)+\int\lambda R_{n}(\theta)\,\d\rho(\theta)\Big).\label{eq:constant}
\end{equation}
Therefore, we have with  probability
of at least $1-\delta$:
\[
\mathcal{E}(\hat{\theta})\le\frac{1}{1-C_{n,\lambda}}\inf_{\rho\ll\Pi}\Big(\int\mathcal{E}_{n}(\theta)\,\d\rho(\theta)+\lambda^{-1}\big(\log\delta^{-1}+\KL(\rho\mid\Pi)\big)\Big).
\]
In order to reduce the integral $\int\mathcal{E}_{n}(\theta)\,\d\rho(\theta)$
to $\int\mathcal{E}(\theta)\,\d\rho(\theta)$, we use Jensen's inequality
and \ref{eq:BersteinConsequence} to obtain for any probability measure
$\rho\ll\Pi$ (which may depend on $\mathcal{D}_{n}$)
\begin{align*}
\E_{\mathcal{D}_{n}}\Big[\exp\Big( & \int\lambda\mathcal{E}_{n}(\theta)-\lambda(1+C_{n,\lambda})\mathcal{E}(\theta)\,\d\rho(\theta)-\KL(\rho\mid\Pi)-\log\delta^{-1}\Big)\Big]\\
= & \;\,\E_{\mathcal{D}_{n}}\Big[\exp\Big(\int\lambda\mathcal{E}_{n}(\theta)-\lambda(1+C_{n,\lambda})\mathcal{E}(\theta)-\log\Big(\frac{\d\rho}{\d\Pi}(\theta)\Big)-\log\delta^{-1}\,\d\rho(\theta)\Big)\Big]\\
\le & \:\,\E_{\mathcal{D}_{n},\theta\sim\rho}\Big[\exp\Big(\lambda\mathcal{E}_{n}(\theta)-\lambda(1+C_{n,\lambda})\mathcal{E}(\theta)-\log\Big(\frac{\d\rho}{\d\Pi}(\theta)\Big)-\log\delta^{-1}\Big)\Big]\\
= & \:\,\E_{\mathcal{D}_{n}}\Big[\int\exp\big(\lambda\mathcal{E}_{n}(\theta)-\lambda(1+C_{n,\lambda})\mathcal{E}(\theta)-\log\delta^{-1}\big)\,\d\Pi(\theta)\Big]\le\delta.
\end{align*}
Using $\1_{[0,\infty)}(x)\le \e^{\lambda x}$ again, we conclude with
probability of at least $1-\delta$:
\[
\int\mathcal{E}_{n}(\theta)\,\d\rho(\theta)\le(1+C_{n,\lambda})\int\mathcal{E}(\theta)\,\d\rho(\theta)+\lambda^{-1}\big(\KL(\rho\mid\Pi)+\log\delta^{-1}\big).
\]
Therefore, we conclude with probability of at least $1-2\delta$
\[
\mathcal{E}(\hat{\theta})\le\frac{1}{1-C_{n,\lambda}}\inf_{\rho\ll\Pi}\Big((1+C_{n,\lambda})\int\mathcal{E}(\theta)\,\d\rho(\theta)+\frac{2}{\lambda}\big(\log\delta^{-1}+\KL(\rho\mid\Pi)\big)\Big)
\]
which yields the claimed bound if $C_{n,\lambda}\le1/2.$
\end{proof}

\subsection{Proof of \ref{thm:oracleinequality}}\label{sec:ProofOracle}

We fix some index set $\mathcal{I}$, a radius $\eta\in(0,1]$ and
apply \ref{prop:mainpaclemma} with $\rho=\rho_{\mathcal{I},\eta}$
defined via
\[
\frac{\d\rho_{\mathcal{I},\eta}}{\d\Pi_{\mathcal{I}}}(\theta)\propto\1_{\{\vert\theta-\theta_{\mathcal{I}}^{\ast}\vert_{\infty}\le\eta\}}
\]
with $\theta_{\mathcal{I}}^{*}$ from \ref{eq:oracle}. Note that
indeed $C_{n,\lambda}\le1/2$ for the given choice of $\lambda$.
In order to control the integral term, we decompose
\begin{align}
\int\mathcal{E}\,\mathrm{d}\rho & =\mathcal{E}(\theta_{\mathcal{I}}^{\ast})+\int\E\big[(f_{\theta}(\mathbf{X})-f(\mathbf{X}))^{2}-(f_{\theta_{\mathcal{I}}^{*}}(\mathbf{X})-f(\mathbf{X}))^{2}\big]\,\d\rho(\theta)\nonumber \\
 & =\mathcal{E}(\theta_{\mathcal{I}}^{\ast})+\int\E[(f_{\theta_{\mathcal{I}}^{\ast}}(\mathbf{X})-f_{\theta}(\mathbf{X}))^{2}]\,\mathrm{d}\rho(\theta)+2\int\E[(f(\mathbf{X})-f_{\theta_{\mathcal{I}}^{\ast}}(\mathbf{X}))(f_{\theta_{\mathcal{I}}^{\ast}}(\mathbf{X})-f_{\theta}(\mathbf{X}))]\,\d\rho(\theta)\nonumber \\
 & \le\mathcal{E}(\theta_{\mathcal{I}}^{\ast})+\int\E[(f_{\theta_{\mathcal{I}}^{\ast}}(\mathbf{X})-f_{\theta}(\mathbf{X}))^{2}]\,\mathrm{d}\rho(\theta)\nonumber \\
 & \qquad+2\int\E\big[(f(\mathbf{X})-f_{\theta_{\mathcal{I}}^{\ast}}(\mathbf{X}))^{2}\big]^{1/2}\E\big[(f_{\theta_{\mathcal{I}}^{\ast}}(\mathbf{X})-f_{\theta}(\mathbf{X}))^{2}\big]^{1/2}\,\mathrm{d}\rho(\theta)\\
 & \le\frac{4}{3}\mathcal{E}(\theta_{\mathcal{I}}^{\ast})+4\int\E[(f_{\theta_{\mathcal{I}}^{\ast}}(\mathbf{X})-f_{\theta}(\mathbf{X}))^{2}]\,\mathrm{d}\rho(\theta),\label{eq:intErho}
\end{align}
using $2ab\le\frac{a^{2}}{3}+3b^{2}$ in the last step. To bound the
remainder, we use the Lipschitz continuity of the map $\theta\mapsto f_{\theta}(\mathbf{\mathbf{x}})$
for fixed $\mathbf{x}$:
\begin{lem}
\label{lem:Lipschitz}Let $\theta,\tilde{\theta}\in[-B,B]^{ P}$.
Then we have for $\mathbf{x}\in\R^{p}$ that
\[
\vert f_{\theta}(\mathbf{x})-f_{\tilde{\theta}}(\mathbf{x})\vert\le4(2rB)^{L}(\vert\mathbf{x}\vert_{1}\lor1)\cdot\vert\theta-\tilde{\theta}\vert_{\infty}.
\]
\end{lem}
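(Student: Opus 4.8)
The plan is to first strip off the clipping and then propagate the weight perturbation through the network layer by layer. Since the map $z\mapsto(-C)\lor(z\land C)$ is the projection onto $[-C,C]$ and hence $1$-Lipschitz, we immediately get $\vert f_\theta(\mathbf{x})-f_{\tilde\theta}(\mathbf{x})\vert\le\vert g_\theta(\mathbf{x})-g_{\tilde\theta}(\mathbf{x})\vert$, so it suffices to prove the bound for the unclipped networks $g_\theta$. Writing $\mathbf{x}^{(l)},\tilde{\mathbf{x}}^{(l)}$ for the layer outputs of $g_\theta,g_{\tilde\theta}$ from the layer-wise representation \ref{eq:neurons}, I would introduce the activation magnitudes $a_l\coloneqq\vert\mathbf{x}^{(l)}\vert_\infty$, $\tilde a_l\coloneqq\vert\tilde{\mathbf{x}}^{(l)}\vert_\infty$, the differences $\Delta_l\coloneqq\vert\mathbf{x}^{(l)}-\tilde{\mathbf{x}}^{(l)}\vert_\infty$, and abbreviate $\delta\coloneqq\vert\theta-\tilde\theta\vert_\infty$. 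Note $\Delta_0=0$ since both networks share the input $\mathbf{x}$.

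The first ingredient is an a priori bound on the magnitudes. Because every weight and bias entry lies in $[-B,B]$ and $\vert\sigma(z)\vert\le\vert z\vert$, the first layer gives $a_1\le B\vert\mathbf{x}\vert_1+B\le 2B(\vert\mathbf{x}\vert_1\lor1)$, while each deeper layer obeys $a_l\le rBa_{l-1}+B$. Since the dominating sequence $u_l=rBu_{l-1}+B$ with $u_1=2B(\vert\mathbf{x}\vert_1\lor1)$ satisfies $u_l\le 2rBu_{l-1}$ (using $rB\ge1$ and $u_{l-1}\ge1$), this yields the geometric bound $a_l\le 2B(2rB)^{l-1}(\vert\mathbf{x}\vert_1\lor1)$, and likewise for $\tilde a_l$.

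The second and main ingredient is the recursion for the differences. Using that $\sigma$ is $1$-Lipschitz coordinatewise and splitting $W^{(l)}\mathbf{x}^{(l-1)}-\tilde W^{(l)}\tilde{\mathbf{x}}^{(l-1)}=W^{(l)}(\mathbf{x}^{(l-1)}-\tilde{\mathbf{x}}^{(l-1)})+(W^{(l)}-\tilde W^{(l)})\tilde{\mathbf{x}}^{(l-1)}$ together with the bias difference, I would obtain $\Delta_1\le 2\delta(\vert\mathbf{x}\vert_1\lor1)$ and, for $l\ge2$, $\Delta_l\le rB\Delta_{l-1}+\delta(r\tilde a_{l-1}+1)$. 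Substituting the magnitude bound (the $+1$ being dominated since $2rB\ge2$) turns this into $\Delta_l\le rB\Delta_{l-1}+2\delta(2rB)^{l-1}(\vert\mathbf{x}\vert_1\lor1)$. Normalizing by $(2rB)^l$ reduces it to the affine recursion $E_l\le\tfrac12 E_{l-1}+\tfrac1{rB}$ with $E_1\le\tfrac1{rB}$, whose geometric-series solution is bounded by $\tfrac2{rB}$, giving $\Delta_l\le 4(2rB)^{l-1}\delta(\vert\mathbf{x}\vert_1\lor1)$. Finally, applying the same splitting to the output layer $g_\theta(\mathbf{x})-g_{\tilde\theta}(\mathbf{x})=W^{(L+1)}(\mathbf{x}^{(L)}-\tilde{\mathbf{x}}^{(L)})+(W^{(L+1)}-\tilde W^{(L+1)})\tilde{\mathbf{x}}^{(L)}+(v^{(L+1)}-\tilde v^{(L+1)})$ and inserting the bounds on $\Delta_L$ and $\tilde a_L$ produces three contributions equal to $2,1,1$ times $(2rB)^L\delta(\vert\mathbf{x}\vert_1\lor1)$, which sum to the claimed factor $4$.

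I expect the main obstacle to be the bookkeeping of constants: the two geometric growths, of the activation magnitudes and of the accumulated perturbation, must combine into exactly $(2rB)^L$ with prefactor $4$ rather than a cruder blow-up, and the normalization trick $E_l=\Delta_l/[(2rB)^l\delta(\vert\mathbf{x}\vert_1\lor1)]$ is precisely what keeps the constant from compounding across the $L$ layers. A minor point requiring care is that the first layer sums over $p$ coordinates (producing the $\vert\mathbf{x}\vert_1$ factor), whereas all deeper layers sum over $r$ coordinates, so the two recursions differ in their base case.
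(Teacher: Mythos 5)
Your proof is correct and follows essentially the same route as the paper: strip the clipping via its $1$-Lipschitz property, then propagate the perturbation layer by layer using the splitting $W\mathbf{x}-\tilde W\tilde{\mathbf{x}}=W(\mathbf{x}-\tilde{\mathbf{x}})+(W-\tilde W)\tilde{\mathbf{x}}$ together with an a priori geometric bound on the activation magnitudes, and finally treat the output layer separately. The only differences are bookkeeping choices ($\ell^\infty$ instead of $\ell^1$ norms for the hidden layers, and a normalized affine recursion instead of directly unrolling the recursion), which lead to the same constant $4(2rB)^L$.
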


Due to the support of $\rho=\rho_{\mathcal{I},\eta}$, we obtain 

\begin{equation}
\int\mathcal{E}\, \d\rho\le\frac{4}{3}\mathcal{E}(\theta_{\mathcal{I}}^{\ast})+\frac{4}{n^{2}}\qquad\text{for}\qquad\eta=\frac{1}{8(2rB)^LpK\,n}.\label{eq:integralterm}
\end{equation}
It remains to bound the Kullback-Leibler term in \ref{eq:mainpacbound}
which can be done with the following lemma:
\begin{lem}
\label{lem:aux_a} We have
\begin{enumerate}
\item[(i)] ${\displaystyle \KL(\rho_{\mathcal{I},\eta}\mid\Pi)=\KL(\rho_{\mathcal{I},\eta}\mid \Pi_{\mathcal{I}})+\log(C_{\mathcal{I}})\quad\text{where}\quad C_{\mathcal{I}}\coloneqq C_{P}2^{\vert\mathcal{I}\vert}\binom{P}{\vert\mathcal{I}\vert},}$
\item[(ii)] ${\displaystyle \KL(\rho_{\mathcal{I},\eta}\mid\Pi_{\mathcal{I}})\le\vert\mathcal{I}\vert\log(2B/\eta).}$
\end{enumerate}
In particular, 
\[
\KL(\rho_{\mathcal{I},\eta}\mid\Pi)\le\vert\mathcal{I}\vert\log(2B/\eta )+\log(C_{\mathcal{I}}).
\]
\end{lem}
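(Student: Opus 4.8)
The plan is to exploit that both $\rho_{\mathcal{I},\eta}$ and $\Pi_{\mathcal{I}}$ are uniform distributions on boxes inside the coordinate subspace $V_{\mathcal{I}}\coloneqq\Span\{e_i:i\in\mathcal{I}\}$, so that (ii) reduces to comparing volumes while (i) reduces to identifying the Radon--Nikodym density of the mixture $\Pi$ against its $\mathcal{I}$-component. I would start with (ii), the routine half. By construction $\rho_{\mathcal{I},\eta}$ is the uniform law on the truncated box $B_\eta\coloneqq\mathcal{S}_{\mathcal{I}}\cap\{\vert\theta-\theta_{\mathcal{I}}^{\ast}\vert_{\infty}\le\eta\}$, with constant density $(2B)^{\vert\mathcal{I}\vert}/\operatorname{vol}(B_\eta)$ with respect to $\Pi_{\mathcal{I}}$, where $\operatorname{vol}$ denotes the $\vert\mathcal{I}\vert$-dimensional Lebesgue measure on $V_{\mathcal{I}}$. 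Hence $\KL(\rho_{\mathcal{I},\eta}\mid\Pi_{\mathcal{I}})=\log\big((2B)^{\vert\mathcal{I}\vert}/\operatorname{vol}(B_\eta)\big)$, and it remains to lower bound $\operatorname{vol}(B_\eta)$. Each coordinate factor is the length of $[\theta_{\mathcal{I},i}^{\ast}-\eta,\theta_{\mathcal{I},i}^{\ast}+\eta]\cap[-B,B]$; since $\theta_{\mathcal{I}}^{\ast}\in[-B,B]^{P}$ and $\eta\le1\le B$, each such length is at least $\eta$, so $\operatorname{vol}(B_\eta)\ge\eta^{\vert\mathcal{I}\vert}$ and the claimed bound $\vert\mathcal{I}\vert\log(2B/\eta)$ follows.

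For (i) I would write the weight of $\Pi_{\mathcal{I}}$ in the mixture as $w_{\mathcal{I}}=2^{-\vert\mathcal{I}\vert}\binom{P}{\vert\mathcal{I}\vert}^{-1}/C_P=1/C_{\mathcal{I}}$ and decompose $\Pi=w_{\mathcal{I}}\Pi_{\mathcal{I}}+\Pi'$ with $\Pi'\coloneqq\sum_{\mathcal{J}\ne\mathcal{I}}w_{\mathcal{J}}\Pi_{\mathcal{J}}$. The key step is to show $\rho_{\mathcal{I},\eta}\perp\Pi_{\mathcal{J}}$ for every $\mathcal{J}\ne\mathcal{I}$, which I would split into two cases. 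If $\mathcal{I}\not\subseteq\mathcal{J}$ (in particular whenever $\mathcal{J}\subsetneq\mathcal{I}$), pick $i_0\in\mathcal{I}\setminus\mathcal{J}$; then $\Pi_{\mathcal{J}}$ is concentrated on the hyperplane $\{\theta_{i_0}=0\}$, whereas $\rho_{\mathcal{I},\eta}$, being absolutely continuous with respect to $\operatorname{vol}$ on $V_{\mathcal{I}}$, assigns that hyperplane mass zero. If instead $\mathcal{I}\subsetneq\mathcal{J}$, then $V_{\mathcal{I}}$ is a proper subspace of $V_{\mathcal{J}}$, so $\Pi_{\mathcal{J}}(V_{\mathcal{I}})=0$ while $\rho_{\mathcal{I},\eta}(V_{\mathcal{I}})=1$. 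In both cases the two measures live on disjoint carriers, giving the singularity; summing over the finitely many $\mathcal{J}$ yields $\rho_{\mathcal{I},\eta}\perp\Pi'$.

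Combining $\rho_{\mathcal{I},\eta}\ll\Pi_{\mathcal{I}}$ with $\rho_{\mathcal{I},\eta}\perp\Pi'$, the chain rule for Radon--Nikodym derivatives gives $\tfrac{\d\rho_{\mathcal{I},\eta}}{\d\Pi}=C_{\mathcal{I}}\tfrac{\d\rho_{\mathcal{I},\eta}}{\d\Pi_{\mathcal{I}}}$ almost surely under $\rho_{\mathcal{I},\eta}$, since on a set carrying all the mass of $\rho_{\mathcal{I},\eta}$ the component $\Pi'$ contributes nothing and $\Pi$ coincides with $w_{\mathcal{I}}\Pi_{\mathcal{I}}$. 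Taking logarithms and integrating against $\rho_{\mathcal{I},\eta}$ then splits the divergence additively, $\KL(\rho_{\mathcal{I},\eta}\mid\Pi)=\KL(\rho_{\mathcal{I},\eta}\mid\Pi_{\mathcal{I}})+\log C_{\mathcal{I}}$, which is (i), and the final displayed inequality is the sum of (i) and (ii). The main obstacle is precisely this singularity argument: because the supports $\mathcal{S}_{\mathcal{J}}$ overlap (those with $\mathcal{J}\subset\mathcal{I}$ lie inside $\mathcal{S}_{\mathcal{I}}$), one cannot read off $\tfrac{\d\Pi_{\mathcal{I}}}{\d\Pi}$ directly and must argue via the differing dimensions of the coordinate subspaces that only the $\mathcal{I}$-component is seen by $\rho_{\mathcal{I},\eta}$.
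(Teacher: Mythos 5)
Your proposal is correct and takes essentially the same route as the paper: for (i), both arguments identify $\frac{\d\rho_{\mathcal{I},\eta}}{\d\Pi}=C_{\mathcal{I}}\frac{\d\rho_{\mathcal{I},\eta}}{\d\Pi_{\mathcal{I}}}$ by showing that $\rho_{\mathcal{I},\eta}$ is singular with respect to every other mixture component $\Pi_{\mathcal{J}}$, $\mathcal{J}\ne\mathcal{I}$ --- the paper packages this via the disjoint exact-support sets $\mathcal{S}_{\mathcal{J},\Leftrightarrow}$, while you argue it per $\mathcal{J}$ through a two-case dimension count, which is the same idea. Likewise, your volume-ratio computation for (ii) is just the paper's coordinate-wise KL tensorization done in one step, using the identical lower bound of $\eta$ on each truncated interval length.
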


Together with ${P \choose \vert\mathcal{I}\vert}\le\frac{P^{\vert \mathcal{I}\vert }}{(\vert \mathcal{I}\vert)!}\le(\e P)^{\vert\mathcal{I}\vert}$
by Stirling's formula, the previous lemma yields
\begin{align}
\KL(\rho\mid \Pi) & \le\vert\mathcal{I}\vert\log\big(4 C_{P}BP\e/\eta\big).\label{eq:KLterm}
\end{align}
Plugging \ref{eq:integralterm} and \ref{eq:KLterm} into the PAC-Bayes
bound \ref{eq:mainpacbound}, we conclude
\begin{align}
\mathcal{E}(\widehat{\theta}_{\lambda}) & \le4\mathcal{E}(\theta_{\mathcal{I}}^{\ast})+\frac{4}{n^{2}}+\frac{4}{\lambda}\big(\vert\mathcal{I}\vert\log\big(32BP\e(2rB)^{L}pKn\big)+\log(2/\delta)\big). \\
 & \le4\mathcal{E}(\theta_{\mathcal{I}}^{\ast})+\frac{\Xi_1}{n}\big(\vert\mathcal{I}\vert L\log(p\lor n)+\log(2/\delta)\big)\label{eq:preoracle}
\end{align}
for $n\ge n_0\coloneqq 2 \lor r\lor B\lor K$ and some constant $\Xi_1$ only depending on $C,\Gamma,\sigma$. Note
that the upper bound in \ref{eq:preoracle} is deterministic and $\mathcal{I}$
is arbitrary. Therefore, we can choose $\mathcal{I}$ such that this
bound is minimized, which completes the proof.\hfill\qed

\subsection{Remaining proofs for \ref{sec:Oracle-inequality}}

\subsubsection{Proof of \ref{cor:mean}}

Jensen's and Markov's inequality yield for $r_{n,p}^{2}$ from \ref{eq:r2}
that
\begin{align*}
\P\Big(\mathcal{E}(\bar{f}_{\lambda})>r_{n,p}^{2}+\frac{\Xi_{1}}{n}+\frac{\Xi_{1}}{n}\log(2/\delta)\Big) & =\P\Big(\Vert\E [f_{\hat{\theta}_{\lambda}}\mid \mathcal{D}_{n}]-f\Vert_{L^{2}(\P^{\mathbf{X}})}^{2}>r_{n,p}^{2}+\frac{\Xi_{1}}{n}+\frac{\Xi_{1}}{n}\log(2/\delta)\Big)\\
 & \le\P\Big(\E\big[\Vert f_{\hat{\theta}_{\lambda}}-f\Vert_{L^{2}(\P^{\mathbf{X}})}^{2}\,\big\vert\,\mathcal{D}_{n}\big]>r_{n,p}^{2}+\frac{\Xi_{1}}{n}+\frac{\Xi_{1}}{n}\log(2/\delta)\Big)\\
 & =\P\Big(\int_{\frac{\Xi_{1}}{n}\log(2/\delta)}^{\infty}\Pi_{\lambda}\big(\Vert f_{\hat{\theta}_{\lambda}}-f\Vert_{L^{2}(\P^{\mathbf{X}})}^{2}>r_{n,p}^{2}+t\,\big\vert\,\mathcal{D}_{n}\big)\,\d t>\frac{\Xi_{1}}{n}\Big)\\
 & \le\frac{n}{\Xi_{1}}\int_{\frac{\Xi_{1}}{n}\log(2/\delta)}^{\infty}\E\big[\Pi_{\lambda}\big(\Vert f_{\hat{\theta}_{\lambda}}-f\Vert_{L^{2}(\P^{\mathbf{X}})}^{2}>r_{n,p}^{2}+t\,\big\vert\,\mathcal{D}_{n}\big)\big]\,\d t.
\end{align*}
Using \ref{thm:oracleinequality}, we conclude
\[
\P\Big(\mathcal{E}(\bar{f}_{\lambda})>r_{n,p}^{2}+\frac{\Xi_{1}}{n}+\frac{\Xi_{1}}{n}\log(2/\delta)\Big)\le\frac{2n}{\Xi_{1}}\int_{\frac{\Xi_{1}}{n}\log(2/\delta)}^{\infty}\e^{-nt/\Xi_{1}}\,\d t=\delta.\tag*{{\qed}}
\]

\subsubsection{Proof of \ref{prop:rate}}

Throughout, denote by $C_{i},i=1,2,\dots$ constants only depending
on upper bounds for $q$, $\vert(d_{1},\dots,d_{q})\vert_{\infty}$,
$\vert\mathbf{t}\vert_{\infty}$, $\vert\beta\vert_{\infty}$ and
$C_{0}.$ 

We will verify that for any sufficiently large $n,M\in\N$ there exists
a sparse ReLU neural network $g=g_{\theta}\in\mathcal{G}(p,C_{1}\lceil\log_{2}n\rceil,C_{2}M,\mathcal{I})$
with $\vert \mathcal{I}\vert \le C_{3}M\lceil\log_{2}n\rceil$ and $\vert\theta\vert_{\infty}\le1\le B$
such that
\begin{equation}
\Vert g-f\Vert_{L^{\infty}([0,1]^p)}\le C_{4}M^{-\beta^{\ast}/t^{\ast}}.\label{eq:approxerror}
\end{equation}

Careful inspection of the proof of \citet[Theorem 1]{schmidthieber2020}
reveals that there exists a sparse ReLU neural network $g\in\mathcal{G}(p,L,r,\mathcal{J})$
with weights and shifts absolutely bounded by $1$ and 
\begin{align*}
L & =3(q-1)+\sum_{i=0}^{q}\big(8+(\lceil\log_{2}n\rceil+5)(1+\lceil\log_{2}(t_{i}\lor\beta_{i})\rceil)\big),\\
r & =6M\max_{i=0,\dots,q}d_{i+1}(t_{i}+\lceil\beta_{i}\rceil)\qquad\text{and}\\
\vert\mathcal{J}\vert & \le  \sum_{i=0}^{q}d_{i+1}\big(141(t_{i}+\beta_{i}+1)^{3+t_{i}}M(\lceil\log_{2}n\rceil+6)+4\big)
\end{align*}
such that \ref{eq:approxerror} holds with 
\[
M=\Big\lceil\Big(\frac{n}{\log^{2}(n)\log(p\lor n)}\Big)^{t^{\ast}/(2\beta^{\ast}+t^{\ast})}\Big\rceil\le n,\label{eq:chooseNbalance}
\]
provided $M\ge\max_{i=0,\dots,q}(\beta_{i}+1)^{t_{i}}\lor(C_{0}(2C_0)^{\beta_i}+1)\e^{t_{i}}.$
Hence, it remains to show that $g$ can also be represented as a ReLU
neural network in 
\begin{equation}
\mathcal{G}(p,C_{1}\lceil\log_{2}n\rceil,C_{2}n,\mathcal{I}).\label{eq:enlargednetwork}
\end{equation}
To do this, we employ the \emph{embedding properties of network function
classes }from \citet[Section 7.1]{schmidthieber2020}.

Note that $L$, $r$ and the upper bound for $\vert\mathcal{J}\vert$ are
independent of $d_{0}=p$ and monotonically increasing in $q$, $\vert (d_1,\dots,d_q)\vert_\infty$, $\vert \mathbf{t}\vert_\infty$ and $\vert \beta\vert_\infty$. Also, $r$ is of order $M$, $L$ is of order $\lceil\log_2 n\rceil$ and the
upper bound for $\vert\mathcal{J}\vert$ is of order $M\lceil\log_2 n\rceil$.
Using the \emph{enlarging} and the \emph{depth synchronisation properties},
$g$ can indeed be written as a ReLU neural network in \ref{eq:enlargednetwork}.
Note that to ensure the depth of the network, we added additional
layers after the last hidden layer, instead of right after the input
to preserve the order of the sparsity.

\ref{thm:oracleinequality} together with  $\mathcal{E}(f_{\theta_{\mathcal{I}}^{\ast}})\le\Vert g_{\theta}-f\Vert_{L^{\infty}([0,1]^{p})}^{2}$ now yields
\begin{equation}
    \mathcal{E}(\hat{\theta}_\lambda)\le 4C_4 M^{-2\beta^\ast/t^\ast} +\frac{\Xi_1C_3}{n}M\lceil\log_2 n\rceil^2\log(p\lor n)+\Xi_1\frac{\log(2/\delta)}{n}
\end{equation}
with probability of at least $1-\delta$.

The convergence rate for the posterior mean is obtained analogously using \ref{cor:mean}.\hfill\qed
\subsection{Proofs of the auxiliary results\label{sec:RemainingProofs}}

\subsubsection{Proof of \ref{lem:Bernstein}}

We write $\mathcal{E}_{n}(\theta)=\frac{1}{n}\sum_{i=1}^{n}Z_{i}$
with centered and independent random variables
\begin{align*}
Z_{i} & \coloneqq(Y_{i}-f_{\theta}(\mathbf{X}_{i}))^{2}-(Y_{i}-f(\mathbf{X}_{i}))^{2}=-\big(2\eps_{i}+f(\mathbf{X}_{i})-f_{\theta}(\mathbf{X}_{i})\big)\big(f_{\theta}(\mathbf{X}_{i})-f(\mathbf{X}_{i})\big)
\end{align*}
Since $f$ and $f_{\theta}$ are bounded by $C$ and $\eps_{i}$ is
sub-Gaussian we have
\[
\E\big[Z_{i}^{2}\big]=\E\big[\big(2\eps_{i}+f(\mathbf{X}_{i})-f_{\theta}(\mathbf{X}_{i})\big)^{2}\big(f_{\theta}(\mathbf{X}_{i})-f(\mathbf{X}_{i})\big)^{2}\big]\le2(4\sigma^{2}+4C^{2})\mathcal{E}(\theta)\eqqcolon U
\]
and for $k\ge3$
\begin{align*}
\E[(Z_{i})_{+}^{k}] & \le\E\big[\vert2\eps_{i}+f(\mathbf{X}_{i})-f_{\theta}(\mathbf{X}_{i})\vert^{k}\vert f_{\theta}(\mathbf{X}_{i})-f(\mathbf{X}_{i})\vert^{k-2}(f_{\theta}(\mathbf{X}_{i})-f(\mathbf{X}_{i}))^{2}\big]\\
 & \le(2C)^{k-2}\E\big[\vert2\eps_{i}+f(\mathbf{X}_{i})-f_{\theta}(\mathbf{X}_{i})\vert^{k}(f_{\theta}(\mathbf{X})-f(\mathbf{X}))^{2}\big]\\
 & \le(2C)^{k-2}2^{k-1}\big(k!2^{k-1}\sigma^{2}\Gamma^{k-2}+(2C)^{k}\big)\mathcal{E}(\theta)\\
 & \le(2C)^{k-2}k!8^{k-2}\big(\Gamma^{k-2}\vee(2C)^{k-2}\big)v=k!Uw^{k-2}.
\end{align*}
In view of $\E[\mathcal{E}_{n}(\theta)]=\mathcal{E}(\theta)$,
Bernstein's inequality \citep[inequality (2.21)]{Massart2007} yields
\[
\E\big[\exp\big(\lambda\big(\mathcal{E}_{n}(\theta)-\mathcal{E}(\theta)\big)\big)\big]\le\exp\Big(\frac{U\lambda^{2}}{n(1-w\lambda/n)}\Big).
\]
The same bound remains true if we replace $Z_{i}$ by $-Z_{i}$.\hfill\qed

\subsubsection{Proof of \ref{lem:Lipschitz}}
Set $\eta\coloneqq\vert\theta-\tilde{\theta}\vert_{\infty}$ and let $W^{(1)},\dots,W^{(L+1)},v^{(1)},\dots,v^{(L+1)}$
and $\tilde W^{(1)},\dots,\tilde W^{(L+1)},\tilde v^{(1)},\dots,\tilde v^{(L+1)}$
be the weights and shifts associated with $\theta$ and $\tilde{\theta}$,
respectively. Define $\tilde{\mathbf{x}}^{(l)}$, $l=0,\dots,L+1$, analogously
to \ref{eq:neurons}. We can recursively deduce from the Lipschitz-continuity
of $\sigma$ that for $l=2,\dots,L$: 
\begin{align}
\vert\mathbf{x}^{(1)}\vert_{1} & \le\vert W^{(1)}\mathbf{x}\vert_{1}+\vert v^{(1)}\vert_{1}\\
 & \le2rB(\vert\mathbf{x}\vert_{1}\lor1),\\
\vert\mathbf{x}^{(1)}-\tilde{\mathbf{x}}^{(1)}\vert_{1} & \le\vert W^{(1)}\mathbf{x}^{(0)}+v^{(1)}-\tilde{W}^{(1)}\tilde{\mathbf{x}}^{(0)}-\tilde{v}^{(1)}\vert_{1}\\
 & \le\eta2r(\vert\mathbf{x}\vert_{1}\lor1),\\
\vert\mathbf{x}^{(l)}\vert_{1} & \le\vert W^{(l)}\mathbf{x}^{(l-1)}\vert_{1}+\vert v^{(l)}\vert_{1}\\
 & \le2rB(\vert\mathbf{x}^{(l-1)}\vert_{1}\vee1)\qquad\text{and}\\
\vert\mathbf{x}^{(l)}-\tilde{\mathbf{x}}^{(l)}\vert_{1} & \le\vert W^{(l)}\mathbf{x}^{(l-1)}+v^{(l)}-\tilde{W}^{(l)}\tilde{\mathbf{x}}^{(l-1)}-\tilde{v}^{(l)}\vert_{1}\\
 & \le\vert(W^{(l)}-\tilde{W}^{(l)})\mathbf{x}^{(l-1)}\vert_{1}+\vert\tilde{W}^{(l)}(\mathbf{x}^{(l-1)}-\tilde{\mathbf{x}}^{(l-1)})\vert_{1}+\vert v^{(l)}-\tilde{v}^{(l)}\vert_{1}\\
 & \le\eta2r(\vert\mathbf{x}^{(l-1)}\vert_{1}\lor1)+rB\vert\mathbf{x}^{(l-1)}-\tilde{\mathbf{x}}^{(l-1)}\vert_{1}.
\end{align}
Therefore, 
\begin{align}
\vert\mathbf{x}^{(L)}\vert_{1} & \le(2rB)^{L-1}(\vert\mathbf{x}^{(1)}\vert_{1}\lor1)\\
 & \le(2rB)^{L}(\vert\mathbf{x}\vert_{1}\lor1)\qquad\text{and}\\
\vert\mathbf{x}^{(L)}-\tilde{\mathbf{x}}^{(L)}\vert_{1} & \le\eta2r\sum_{k=1}^{L-1}(rB)^{k-1}(\vert\mathbf{x}^{(L-k)}\vert_{1}\lor1)+(rB)^{L-1}\vert\mathbf{x}^{(1)}-\tilde{\mathbf{x}}^{(1)}\vert_{1}\\
 & \le\eta2^{(L+1)}r(\vert\mathbf{x}\vert_{1}\lor1)(rB)^{L-1}
\end{align}
Since the clipping function $y\mapsto(-C)\vee(y\wedge C)$ has Lipschitz
constant $1$, we conclude 
\begin{align}
\vert f_{\theta}(\mathbf{x})-f_{\tilde{\theta}}(\mathbf{x})\vert & \le\vert g_{\theta}(\mathbf{x})-g_{\tilde{\theta}}(\mathbf{x})\vert\\
 & =\vert\mathbf{\mathbf{x}}^{(L+1)}-\tilde{\mathbf{x}}^{(L+1)}\vert\\
 & =\vert W^{(L+1)}\mathbf{x}^{(L)}+v^{(L+1)}-\tilde{W}^{(L+1)}\tilde{\mathbf{x}}^{(L)}-\tilde{v}^{(L+1)}\vert\\
 & \le\vert(W^{(L+1)}-\tilde{W}^{(L+1)})\mathbf{x}^{(L)}\vert+\vert\tilde{W}^{(L+1)}(\mathbf{x}^{(L)}-\tilde{\mathbf{x}}^{(L)})\vert+\vert v^{(L+1)}-\tilde{v}^{(L+1)}\vert\\
 & \le r\vert W^{(L+1)}-\tilde{W}^{(L+1)}\vert_{\infty}\vert\mathbf{x}^{(L)}\vert_{1}+r\vert\tilde{W}^{(L+1)}\vert_{\infty}\vert\mathbf{x}^{(L)}-\tilde{\mathbf{x}}^{(L)}\vert_{1}+\vert v^{(L+1)}-\tilde{v}^{(L+1)}\vert\\
 & \le\eta r(2rB)^{L}(\vert \mathbf{x}\vert_{1}\lor1)+\eta(rB)^{L}2^{L+1}(\vert\mathbf{x}\vert_{1}\lor1)+\eta\\
 & \le\eta4(2rB)^{L}(\vert\mathbf{x}\vert_{1}\lor1).\tag*{{\qed}}
\end{align}

\subsubsection{Proof of \ref{lem:aux_a}}

\emph{(i)} We will show that
\begin{equation}
\frac{\d\rho_{\mathcal{I},\eta}}{\d\Pi}=C_{\mathcal{I}}\frac{\d\rho_{\mathcal{I},\eta}}{\d\Pi_{\mathcal{I}}}.\label{eq:rhodensity}
\end{equation}
from which we can deduce
\[
\KL(\rho_{\mathcal{I},\eta}\mid\Pi)=\int\log\Big(\frac{\d\rho_{\mathcal{I},\eta}}{\d\Pi}\Big)\,\d\rho_{\mathcal{I},\eta}=\int\log\Big(\frac{\d\rho_{\mathcal{I},\eta}}{\d\Pi_{\mathcal{I}}}\Big)\,\d\rho_{\mathcal{I},\eta}+\log(C_{\mathcal{I}})=\KL(\rho_{\mathcal{I}, \eta}\mid \Pi_{\mathcal{I}})+\log(C_{\mathcal{I}}).
\]
For \ref{eq:rhodensity}, we need to show
\begin{equation}
\rho_{\mathcal{I},\eta}(A)=\int_{A}C_{\mathcal{I}}^{-1}\frac{\d\rho_{\mathcal{I},\eta}}{\d\Pi}\,\d\Pi_{\mathcal{I}}\label{eq:rhodensity2}
\end{equation}
for all $A\in\mathscr{B}_{\R^{P}}$. Observe that for the sets
\[
\mathcal{S}_{\mathcal{J},\Leftrightarrow}\coloneqq\{\theta\in\mathcal{S_{\mathcal{J}}}\mid\theta_{i}\ne0\Leftrightarrow i\in\mathcal{J}\}
\]
with $\emptyset\ne\mathcal{J}\subseteq\{1,\dots,P\}$, we have
\begin{equation}
\Pi_{\mathcal{J}}(\mathcal{S}_{\mathcal{J},\Leftrightarrow})=1.\label{eq:eqneq}
\end{equation}
In particular, \ref{eq:eqneq} holds for $\mathcal{J}=\mathcal{I}$.
Since also $\rho_{\mathcal{I},\eta}(\mathcal{S}_{\mathcal{I},\Leftrightarrow})=1$,
no generality is lost in additionally assuming $A\subseteq\mathcal{S}_{\mathcal{I},\Leftrightarrow}.$
Note how
\begin{equation}
\mathcal{S}_{\mathcal{J},\Leftrightarrow}\cap\mathcal{S}_{\mathcal{I},\Leftrightarrow}=\emptyset\qquad\forall\mathcal{J}\ne\mathcal{I}.\label{eq:emptysets}
\end{equation}
Combining \ref{eq:eqneq} with \ref{eq:emptysets}, we see that
\[
\int_{A}\frac{\d\rho_{\mathcal{I},\eta}}{\d\Pi}\,\d\Pi_{\mathcal{J}}=0\qquad\forall\mathcal{J}\ne\mathcal{I}.
\]
Therefore, 
\[
\rho_{\mathcal{I},\eta}(A)=\int_{A}\frac{\d\rho_{\mathcal{I},\eta}}{\d\Pi}\,\d\Pi=\int_{A}C_{\mathcal{I}}^{-1}\frac{\d\rho_{\mathcal{I},\eta}}{\d\Pi}\,\d\Pi_{\mathcal{I}}.
\]

\bigskip

\noindent\emph{(ii)} Since $\rho_{\mathcal{I},\eta}$ and $\Pi_{\mathcal{I}}$
are product measures, their KL-divergence is equal to the sum of the
KL-divergences in each of the $P$ factors. For factors with index
$i\notin\mathcal{I}$, this is zero, as both factors have all their
mass in $0$. For factors with index $i\in\mathcal{I}$, we are comparing
\[
\mathcal{U}([(\theta_{\mathcal{I}}^{\ast})_{i}-\eta,(\theta_{\mathcal{I}}^{\ast})_{i}+\eta]\cap[-B,B])\qquad\text{with}\qquad\mathcal{U}([-B,B]).
\]
The KL-divergence of these distributions is equal to
\[
\log\Big(\frac{\lebesgue([-B,B])}{\lebesgue([(\theta_{\mathcal{I}}^{\ast})_{i}-\eta,(\theta_{\mathcal{I}}^{\ast})_{i}+\eta]\cap[-B,B])}\Big)\le\log\Big(\frac{\lebesgue([-B,B])}{\lebesgue([0,\eta])}\Big)=\log(2B/\eta).
\]
Thus,
\[
\KL(\rho_{\mathcal{I},\eta}\mid \Pi_{\mathcal{I}})=\sum_{i\in\mathcal{I}}\KL\big(\mathcal{U}([(\theta_{\mathcal{I}}^{\ast})_{i}-\eta,(\theta_{\mathcal{I}}^{\ast})_{i}+\eta]\cap[-B,B])\mid\mathcal{U}([-B,B])\big)\le\vert\mathcal{I}\vert\log(2B/\eta).\tag*{{\qed}}
\]

\section{Implementation of the mixing prior distribution\label{sec:Implementation}}

To incorporate sparse networks, the algorithm from \ref{sec:MCMC} has to be extended. The following augmentations to the proposal density are inspired by \citet{alquier2013}.

For a fixed active set $\mathcal{I}$ we modify the proposal density
from \ref{eq:proposal} to 
\[
\psi_{\mathcal{I}}\big((\tau)_{i\in\mathcal{I}}\mid\theta\big)=\frac{1}{(2\pi s^{2})^{\vert \mathcal{I}\vert /2}}\exp\Big(-\frac{1}{2 s^{2}}\sum_{i\in\mathcal{I}}\big(\tau_{i}-\theta_{i}+\gamma\tfrac{\partial}{\partial\theta_{i}}R_{n}(\theta)\big)^{2}\Big)
\]
and set $\tau_{i}=0$ for all $i\notin\mathcal{I}$. The mixture over
sparse active sets leads to the conditional proposal density
\begin{align*}
q(\tau & \mid\theta)=\frac{q_{-}(\tau\mid\theta)+2q_{=}(\tau\mid\theta)+q_{+}(\tau\mid\theta)}{4}\1_{\{1<\vert\mathcal{I}\vert<P\}}\\
 & \qquad\qquad+\frac{q_{+}(\tau\mid\theta)+2q_{=}(\tau\mid\theta)}{3}\1_{\{\vert\mathcal{I}\vert=1\}}+\frac{q_{-}(\tau\mid\theta)+2q_{=}(\tau\mid\theta)}{3}\1_{\{\vert\mathcal{I}\vert=P\}},
\end{align*}
with $\mathcal{I}=\{i:\theta_{i}\ne0\}$ and some $q_{-}$ and $q_{+}$
proposing to remove or add a component, respectively, while $q_{=}$
leaves the active set untouched. Specifically, we choose 
\begin{gather*}
q_{=}(\tau\mid\theta)=\psi_{\mathcal{I}}(\tau\mid\theta),\quad q_{-}(\tau\mid\theta)=\sum_{i\in\mathcal{I}}w_{i}^{-}\psi_{\mathcal{I}\setminus\{i\}}(\tau\mid\theta)\quad\text{and}\quad q_{+}(\tau\mid\theta)=\sum_{i\in\mathcal{I}^{\comp}}w_{i}^{+}\psi_{\mathcal{I}\cup\{i\}}(\tau\mid\theta)
\end{gather*}
with
\[
w_{i}^{-}=\exp(-\vert\theta_{i}\vert)\Big/\sum_{j\in\mathcal{I}}\exp(-\vert\theta_{j}\vert)\qquad\text{and}\qquad w_{i}^{+}=\tilde{w}_{i}^{+}\Big/\sum_{j\in\mathcal{I}^{\comp}}\tilde{w}_{j}^{+},
\]
where $\tilde{w}_{i}^{+}=\vert\{j\in\mathcal{I}^{\comp}:\vert\frac{\partial}{\partial\theta_{j}}R_{n}(\theta)\vert\le\vert\frac{\partial}{\partial\theta_{i}}R_{n}(\theta)\vert\}\vert^{2}$.
The weights $\tilde{w}_{i}^{-}$ are chosen such that (absolutely)
smaller entries have a larger probability of being removed. On the
other hand, $\tilde{w}_{i}^{+}$ are chosen such that components with
a large impact (as measured by $\nabla_{\theta}R_{n}(\theta)$) on
the model have a higher probability of being included. Note that this
extension of the algorithm requires no additonal computation of $\nabla_{\theta}R_{n}(\theta)$,
as this was already done in the previous MCMC step. Finally, the mixing
prior from \ref{eq:prior} also has to be accounted for in the acceptance
probabilities, leading to
\[
\alpha(\tau\mid\theta)=\min\Big\{1,\exp\big(-\lambda R_{n}(\tau)+\lambda R_{n}(\theta)\big)\frac{\Pi(\tau)q(\theta\mid\tau)}{\Pi(\theta)q(\tau\mid\theta)}\Big\},
\]
where with some abuse of notation $\Pi(\theta)$ denotes the probability
density of the prior. 

\bibliographystyle{apalike2}
\bibliography{references}

\end{document}